\documentclass[12pt]{article}    
\usepackage{amsmath}
\usepackage{amssymb}
\usepackage{epsfig}
\usepackage{textcomp}
\usepackage{multirow}
\usepackage{amsthm}
\usepackage{graphicx}
\usepackage{cite}
\usepackage{amsfonts}
\usepackage{mathrsfs}
\usepackage{color}
\textwidth 6.5in \textheight 9.25in \topmargin 0in \headheight 0in
\oddsidemargin -0.1in \evensidemargin 0in

\newtheorem{Theorem}{Theorem}[section]

\newtheorem{Definition}[Theorem]{Definition}

\begin{document}           
	\title{\Large New numerical approach for fractional differential equations\footnotemark}
	
	\author{\small Abdon Atangana\footnotemark \;\;and Kolade M. Owolabi\footnotemark \\
		\small \footnotemark[2]\;\footnotemark[3] Institute for Groundwater Studies, Faculty of Natural and Agricultural Sciences \\
		\small University of the Free State, Bloemfontein 9300, South
		Africa}
	\date{}
	\maketitle
	\def\thefootnote{\fnsymbol{footnote}}
	\setcounter{footnote}{0} \footnotetext[2]{E-mail addresses:
		abdonatangana@yahoo.fr (A. Atangana); mkowolax@yahoo.com (K.M. Owolabi)}
	\noindent
	
	\begin{abstract}
		\noindent
	 In the present case, we propose the correct version of the fractional Adams-Bashforth methods which take into account the nonlinearity of the kernels including the power law for the Riemann-Liouville type, the exponential decay law for the Caputo-Fabrizio case and the Mittag-Leffler law for the Atangana-Baleanu scenario.The Adams-Bashforth method for fractional differentiation suggested and are commonly use in the literature nowadays is not mathematically correct and the method was derived without taking into account the nonlinearity of the power law kernel. Unlike the proposed version found in the literature, our approximation, in all the cases, we are able to recover the standard case whenever the fractional power $\alpha=1$. 	
	\end{abstract}
	{\bf 2010 Mathematics Subject Classification}: {26A33, 34A34, 65M06}    \\
	\\
	\noindent
	{\bf Keywords:}  {Caputo derivative; Fractional differential equation.}

\section{Introduction}
Fractional calculus is known to be a generalization of the standard or integer-order calculus with a history of not less than over three centuries. It can be dated back to Leibniz's letter to L'Hospital, in which the meaning of the one-half order derivative was first introduced and discussed \cite{Mil93}. Although fractional calculus has such a long history, most of the research conducted still stay in the realm of theory, due to the lack of proper mathematical analysis methods and real applications.

Until the past decades, when many researchers pointed out that fractional derivative and fractional differential equation do have many applications in various fields. Differential problems with fractional derivative order have now become the most useful and powerful tools for describing nonlinear phenomena that are encountered in many application areas of biology, chemistry, ecology, engineering and various domains of applied sciences. A lot of mathematical models, such as in viscoelastic mechanics,  acoustic dissipation, boundary layer effects in duct,  biomedical engineering, power-law phenomena in fluid and complex network, allometric scaling laws in mathematical ecology and epidemiology, control theory, continuous time random walk, dielectric polarization,  porous media, quantitative finance, quantum evolution of complex systems, L\'evy statistics, fractional Brownian, fractional signal and image processing,  electrode-electrolyte polarization, electromagnetic waves, filters motion, phase-locked loops and non-local phenomena have justified to give a better description of the phenomenon under investigation than models with the integer order derivative \cite{Ata16, Gou16}.

Nowadays, there have been a lot studies on approximate methods for fractional differential equations. For instance, Dithelm
et al. and Li et al., have reported some results on numerical fractional ordinary differential equations \cite{Dit03, Li11}. When seeking an approximate solution  to the fractional order ordinary and partial differential equations, among many other choices that have been used are include the Adomian decomposition, homotopy perturbation and differential transform methods, for example, see \cite{Li09, Mom07}. A lot has been reported in the literature on various  fractional derivatives, ranging from the Riemann-Liouville to Atangana-Baleanu fractional derivative versions \cite{Cap15, Cap16, Ata16a}. Not only that, when numerically simulating such models, different numerical approximation techniques have been adopted in both space \cite{Owo16, Owo16a, Owo16b, Owo17, Owo17a, Owo17b} and time \cite{Bal16, Gom16, Gom17}.

The Adams-Bashforth has been recognized as a great and powerful numerical method able to provide a numerical solution closer to the exact solution. This method was developed with the classical differentiation using the fundamental theorem of calculus and taking the difference between two times including $t_{n+1}$ and $t_n$. This method was later extended to the concept of fractional differentiation with Caputo and Riemann-Liouville derivatives, however, the adaptation was not mathematically correct as the kernel of fractional integration is non-linear. In addition to this when the fractional order $\alpha = 1$ with this fractional version, we do not recover the classical Adams-Bashforth numerical scheme. In this paper, we will propose a new Adams-Bashforth for fractional differentiation with Caputo, Caputo-Fabrizio and Atangana-Baleanu derivatives, this version takes into account the nonlinearity of the kernels including the power law for Riemann-Liouville case, the exponential decay law for Caputo-Fabrizio case and Mittag-Leffler for Atangana-Baleanu case. Indeed when the fractional order turns to 1 one is expected to recover the classical Adams-Bashforth method.

\section{Preliminaries}
In this section, we recall some basic definitions and properties of fractional calculus theory which are useful in the next sections.

\begin{Definition}\emph{
	The Caputo fractional derivative of order $\alpha>0$ is defined by
\begin{equation}
_0^C\mathsf{D}_t^\alpha y(t)=\frac{1}{\Gamma(\alpha)}\int_{0}^{t}f(\lambda,y(\lambda))(t-\lambda)^{\alpha-1}d\lambda
\end{equation}   }
\end{Definition}

\begin{Definition}\emph{
	Let $y\in H^1(a,b),\;b>a$ and $\alpha\in[0,1]$. Then the new Caputo version of fractional derivative is defined as
\begin{equation}
_0^{CF}\mathsf{D}_t^\alpha y(t)=\frac{M(\alpha)}{1-\alpha}\int_{a}^{t}y'(\tau)\exp\left[-\frac{\alpha}{1-\alpha}(t-\tau)\right]d\tau
\end{equation}
where $M(\alpha)$ is a normalization function, such that $M(0)=M(1)=1$ \cite{Ata16c, Cap15, Cap16, Los15}. Nevertheless, if the function $y\notin H^1[a,b]$ then, the new derivative called the Caputo-Fabrizio fractional derivative can be defined as
\begin{equation}
_0^{CF}\mathsf{D}_t^\alpha y(t)=\frac{M(\alpha)}{1-\alpha}\int_{0}^{t}y'(\tau)\exp\left[-\frac{\alpha}{1-\alpha}(t-\tau)\right]d\tau.
\end{equation} }	
\end{Definition}

\begin{Theorem}
	Let $y(t)$ be a function for which the Caputo-Fabrizio exists, then, the Sumudu transform of the Caputo-Fabrizio fractional derivative of $y(t)$ is given as
	\begin{equation}
	ST\left(^{CF}_0\mathcal{D}_t^\alpha\right)(y(t))=M(\alpha)\frac{SF(y(t))-y(0)}{1-\alpha+\alpha u}
	\end{equation}
\end{Theorem}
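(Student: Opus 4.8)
The plan is to compute the Sumudu transform directly from the integral definition of the Caputo-Fabrizio derivative, recognizing that the derivative is a convolution of $y'(t)$ with an exponential kernel. First I would recall the Sumudu transform $ST[g(t)](u) = \frac{1}{u}\int_0^\infty g(t)\exp(-t/u)\,dt$ (or the equivalent normalized form), and note its two essential properties: it linearizes convolutions up to a factor of $u$, and it transforms derivatives via $ST[y'(t)] = \frac{SF(y(t)) - y(0)}{u}$, where $SF$ denotes the Sumudu transform of $y$. The key observation is that
\begin{equation}
{}^{CF}_0\mathcal{D}_t^\alpha y(t) = \frac{M(\alpha)}{1-\alpha}\left(y' * k_\alpha\right)(t), \qquad k_\alpha(t) = \exp\left[-\frac{\alpha}{1-\alpha}t\right],
\end{equation}
so the whole problem reduces to transforming a convolution.

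The main steps proceed as follows. First I would apply the convolution property of the Sumudu transform to write $ST\left[{}^{CF}_0\mathcal{D}_t^\alpha y\right] = \frac{M(\alpha)}{1-\alpha}\, u\, ST[y'(t)]\, ST[k_\alpha(t)]$, being careful about the factor of $u$ that the Sumudu convolution theorem introduces (this differs from the Laplace convention and is exactly where errors creep in). Next I would compute the transform of the exponential kernel explicitly: since $ST[\exp(-ct)] = \frac{1}{1+cu}$ with $c = \frac{\alpha}{1-\alpha}$, we get $ST[k_\alpha] = \frac{1}{1 + \frac{\alpha u}{1-\alpha}} = \frac{1-\alpha}{1-\alpha+\alpha u}$. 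Substituting the derivative rule $ST[y'] = \frac{SF(y(t)) - y(0)}{u}$ then collapses the expression.

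Finally I would assemble the pieces. Multiplying $\frac{M(\alpha)}{1-\alpha} \cdot u \cdot \frac{SF(y(t)) - y(0)}{u} \cdot \frac{1-\alpha}{1-\alpha+\alpha u}$, the stray factors of $u$ cancel and the $(1-\alpha)$ in the numerator of the kernel transform cancels the $(1-\alpha)$ in the denominator of the prefactor, leaving precisely $M(\alpha)\frac{SF(y(t)) - y(0)}{1-\alpha+\alpha u}$, which is the claimed identity.

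I expect the main obstacle to be bookkeeping with the normalization conventions rather than any deep difficulty: the Sumudu transform has several slightly different definitions in the literature, and the placement of the factor $u$ in both the convolution theorem and the derivative rule must be consistent for the cancellations to come out exactly as stated. A secondary technical point is justifying the interchange of integration implicit in the convolution theorem, which is routine given that $y \in H^1$ guarantees $y'$ is integrable and the kernel decays exponentially, so Fubini applies without issue.
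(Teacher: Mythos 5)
Your proposal is correct. Writing the Caputo--Fabrizio derivative as $\frac{M(\alpha)}{1-\alpha}\,(y'*k_\alpha)(t)$ with $k_\alpha(t)=\exp\bigl[-\frac{\alpha}{1-\alpha}t\bigr]$, and then combining the Sumudu convolution rule $ST[f*g]=u\,ST[f]\,ST[g]$, the derivative rule $ST[y']=\frac{SF(y(t))-y(0)}{u}$, and $ST[e^{-ct}]=\frac{1}{1+cu}$ with $c=\frac{\alpha}{1-\alpha}$, does assemble exactly to $M(\alpha)\frac{SF(y(t))-y(0)}{1-\alpha+\alpha u}$; your bookkeeping of the factor of $u$ in the convolution theorem, which is the step most often botched, is right. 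Note, however, that there is nothing in the paper to compare your argument against: the paper's entire ``proof'' of this theorem is the sentence ``See Atangana \cite{Ata16} for detail.'' Your computation is the standard derivation (in substance the one carried out in that cited reference, which proceeds via the same transform identities), so in effect you have supplied the self-contained proof that the paper omits.
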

\begin{proof}
	See Atangana \cite{Ata16} for detail.
\end{proof}

Atangana and Baleanu \cite{Ata16a} proposed the following derivatives.
\begin{Definition}\emph{
		Let $y\in H^1(a,b),\;a<b,\;\;\alpha\in[0,1]$ then, the definition of the Atangana and Baleanu fractional derivative in Caputo sense is given as \cite{Ata16a}
		\begin{equation}\label{ABC}
		^{ABC}_a\mathcal{D}_t^\alpha[y(t)]=\frac{M(\alpha)}{1-\alpha}\int_{a}^{t}y'(\tau)E_\alpha\left[-\alpha\frac{(t-\tau)^\alpha}{1-\alpha}\right]d\tau
		\end{equation}
		where $M(\alpha)$ has the same properties as in the case of the Caputo-Fabrizio fractional derivative.}
\end{Definition}

The above definition is considered to be useful to discuss real world problems, and it will also be a great advantage when applying the Laplace transform to solve some real life (physical) models with initial conditions. However, it should be noted that  we do not recover the original function when $\alpha=0$ except when at the origin the function vanishes. To avoid this kind of problem, the following definition is proposed.
\begin{Definition}\emph{
		Let $y\in H^1(a,b),\;a<b,\;\;\alpha\in[0,1]$ then, the definition of the Atangana-Baleanu fractional derivative in Riemann-Liouville sense becomes \cite{Ata16a}
		\begin{equation}\label{ABRL}
		^{ABR}_a\mathcal{D}_t^\alpha[y(t)]=\frac{M(\alpha)}{1-\alpha}\frac{d}{dt}\int_{a}^{t}y(\tau)E_\alpha\left[-\alpha\frac{(t-\tau)^\alpha}{1-\alpha}\right]d\tau
		\end{equation}	   }
\end{Definition}
clearly, both equations (\ref{ABC}) and (\ref{ABRL}) have a non-local kernel. We also obtain zero whenever the function in equation (\ref{ABC}) is constant.

\section{Numerical techniques for fractional differential equations}
The aim of this section is to introduce a new numerical approximation approach  based on the Caputo, Caputo-Fabrizio and Atangana-Baleanu fractional derivatives for the discretization of fractional differential equations. We also give the stability and convergence results for each of the derivatives.

\subsection{The Caputo fractional derivative }
We consider the following fractional differential equation
\begin{equation}\label{main1}
_0^C\mathsf{D}_t^\alpha y(t)=f(t,y(t))
\end{equation}
By applying the fundamental theorem of calculus on equation (\ref{main1}), we obtain
\begin{equation}
y(t)-y(0)=\frac{1}{\Gamma(\alpha)}\int_{0}^{t}f(\lambda,y(\lambda))(t-\lambda)^{\alpha-1}d\lambda,
\end{equation}
thus at $t=t_{n+1},\;n=0,1,2,\ldots$, we obtain
\begin{equation}\label{main3}
y(t_{n+1})-y(0)=\frac{1}{\Gamma(\alpha)}\int_{0}^{t_{n+1}}(t_{n+1}-t)^{\alpha-1}f(t,y(t))dt
\end{equation}
and
\begin{equation}\label{main4}
y(t_{n})-y(0)=\frac{1}{\Gamma(\alpha)}\int_{0}^{t_{n}}(t_n-t)^{\alpha-1}f(t,y(t))dt
\end{equation}
By subtracting (\ref{main4}) from (\ref{main3}), we get
\begin{eqnarray}
y(t_{n+1})=y(t_n)+\frac{1}{\Gamma(\alpha)}\int_{0}^{t_{n+1}}(t_{n+1}-t)^{\alpha-1} f(t,y(t))dt+\frac{1}{\Gamma(\alpha)}\int_{0}^{t_n}(t_n-t)^{\alpha-1}f(t,y(t))dt.\nonumber
\end{eqnarray}
This implies that
\begin{equation}
y(t_{n+1})=y(t_n)+A_{\alpha,1}+A_{\alpha,2}
\end{equation}
where
$$A_{\alpha,1}=\frac{1}{\Gamma(\alpha)}\int_{0}^{t_{n+1}}(t_{n+1}-t)^{\alpha-1} f(t,y(t))dt $$
and
$$A_{\alpha,2}=\frac{1}{\Gamma(\alpha)}\int_{0}^{t_n}(t_n-t)^{\alpha-1}f(t,y(t))dt,$$
the function $f(t,y(t))$ can be approximated using the Lagrange interpolation as
\begin{eqnarray}
P(t)\simeq \frac{t-t_{n-1}}{t_n-t_{n-1}}f(t_n,y_n)+\frac{t-t_{n}}{t_{n-1}-t_n}f(t_{n-1},y_{n-1})\nonumber\\
=\frac{f(t_n,y_n)}{h}(t-t_{n-1})-\frac{f(t_{n-1},y_{n-1})}{h}(t-t_n).
\end{eqnarray}
Thus,
\begin{equation}
A_{\alpha,1}=\frac{f(t_n,y_n)}{h\Gamma(\alpha)}\int_{0}^{t_{n+1}}(t_{n+1}-t)^{\alpha-1}(t-t_{n-1})dt-\frac{f(t_{n-1},y_{n-1})}{\Gamma(\alpha)h}\int_{0}^{t_{n+1}}(t_{n+1}-t)(t-t_n)dt
\end{equation}
\begin{eqnarray}
A_{\alpha,1}&=&\frac{f(t_n,y_n)}{h\Gamma(\alpha)}\int_{0}^{t_{n+1}}y^{\alpha-1}(t_{n+1-y-t_{n-1}})dy-\frac{f(t_{n-1},y_{n-1})}{\Gamma(\alpha)h}\int_{0}^{t_{n+1}}y^{\alpha-1}(t_{n+1}-y-t_{n})dy\nonumber\\
&=&\frac{f(t_n,y_n)}{h\Gamma(\alpha)}\left\{\frac{2ht^{\alpha}_{n+1}}{\alpha}-\frac{t^{\alpha+1}_{n+1}}{\alpha+1}\right\}-  \frac{f(t_{n-1},y_{n-1})}{h\Gamma(\alpha)}\left\{\frac{ht^{\alpha}_{n+1}}{\alpha}-\frac{t^{\alpha+1}_{n+1}}{\alpha+1}\right\}
\end{eqnarray}
Therefore,
\begin{equation}
A_{\alpha,1}=\frac{f(t_n,y_n)}{h\Gamma(\alpha)}\left\{\frac{2h}{\alpha}t_{n+1}^\alpha-\frac{t_{n+1}^{\alpha+1}}{\alpha+1}  \right\}-\frac{f(t_{n-1},y_{n-1})}{h\Gamma(\alpha)}\left\{\frac{h}{\alpha}t_{n+1}^\alpha-\frac{t_{n+1}^{\alpha+1}}{\alpha+1}  \right\}.
\end{equation}

Similarly, we obtain
\begin{eqnarray}
A_{\alpha,2}&=&\frac{f(t_n,y_n)}{h\Gamma(\alpha)}\int_{0}^{t_n}(t_n-t)^{\alpha-1}(t-t_{n-1})dt-\frac{(t_{n-1},y_{n-1})}{h\Gamma(\alpha)}\int_{0}^{t_n}(t_n-t)^{\alpha-1}(t-t_n)dt\nonumber\\
&=&\frac{f(t_n,y_n)}{h\Gamma(\alpha)}\int_{0}^{t_n}y^{\alpha-1}\left\{t_n-y-t_{n-1}\right\}dy + \frac{f(t_{n-1},y_{n-1})}{h\Gamma(\alpha)}\frac{t^{\alpha+1}_n}{\alpha}\\
&=&\frac{f(t_n,y_n)}{h\Gamma(\alpha)}\left\{\frac{ht^\alpha_{n}}{\alpha}-\frac{t^{\alpha+1}_n}{\alpha+1}\right\}+\frac{f(t_{n-1},y_{n-1})}{h\Gamma(\alpha+1)}t_n^{\alpha+1}.\nonumber
\end{eqnarray}
Thus the approximate solution is given as
\begin{eqnarray}
y(t_{n+1})&=&y(t_n)+\frac{f(t_n,y_n)}{h\Gamma(\alpha)}\left\{\frac{2h}{\alpha}t^\alpha_{n+1}-\frac{t_{n+1}^{\alpha+1}}{\alpha+1}+\frac{h}{\alpha}t_n^\alpha-\frac{t_n^{\alpha+1}}{\alpha} \right\}\nonumber\\
&&+\frac{f(t_{n-1},y_{n-1})}{h\Gamma(\alpha)}\left\{  \frac{h}{\alpha}t^\alpha_{n+1}-\frac{t_{n+1}^{\alpha+1}}{\alpha+1}+\frac{t_n^{\alpha}}{\alpha+1}  \right\}.
\end{eqnarray}

\begin{Theorem}
	Let
	$$_0^C\mathsf{D}_t^\alpha y(t)=f(t,y(t))$$
	be a fractional differential equation such that $f$ is bounded, then the numerical solution of $y(t)$ is given by
	\begin{eqnarray*}
	y(t_{n+1})&=&y(t_n)+\frac{f(t_n,y_n)}{h\Gamma(\alpha)}\left\{\frac{2h}{\alpha}t^\alpha_{n+1}-\frac{t_{n+1}^{\alpha+1}}{\alpha+1}+\frac{h}{\alpha}t_n^\alpha-\frac{t_n^{\alpha+1}}{\alpha} \right\}\nonumber\\
	&&+\frac{f(t_{n-1},y_{n-1})}{h\Gamma(\alpha)}\left\{  \frac{h}{\alpha}t^\alpha_{n+1}-\frac{t_{n+1}^{\alpha+1}}{\alpha+1}+\frac{t_n^{\alpha}}{\alpha+1}  \right\}+R_n^\alpha(t)
	\end{eqnarray*}
	where
	$$R_n^\alpha(t)<\frac{h^{3+\alpha}M}{12\Gamma(\alpha+1)}\left\{(n+1)^\alpha+n^2\right\}$$
\end{Theorem}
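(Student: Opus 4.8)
The numerical formula itself is already in hand from the Lagrange-interpolation argument preceding the statement; what remains is to identify and bound the remainder $R_n^\alpha$. The plan is to interpret $R_n^\alpha$ as exactly the error committed when the true integrand $f(t,y(t))$ is replaced by the two–point Lagrange interpolant $P(t)$ through $(t_{n-1},f_{n-1})$ and $(t_n,f_n)$ inside the two integrals $A_{\alpha,1}$ and $A_{\alpha,2}$. Thus I would write
\begin{equation*}
R_n^\alpha=\frac{1}{\Gamma(\alpha)}\int_0^{t_{n+1}}(t_{n+1}-t)^{\alpha-1}\big[f(t,y(t))-P(t)\big]\,dt+\frac{1}{\Gamma(\alpha)}\int_0^{t_n}(t_n-t)^{\alpha-1}\big[f(t,y(t))-P(t)\big]\,dt,
\end{equation*}
so that the estimate reduces to a quantitative control of the interpolation error against each (weakly singular) kernel.

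The key step is the classical remainder formula for linear interpolation: assuming $f(t,y(t))$ is twice continuously differentiable in $t$ along the solution, there is, for each $t$, a point $\xi$ in the spanning interval with
\begin{equation*}
f(t,y(t))-P(t)=\frac{1}{2!}\,\frac{\p^2}{\p t^2}f(\xi,y(\xi))\,(t-t_{n-1})(t-t_n).
\end{equation*}
Here the hypothesis that $f$ be bounded is used in the stronger sense that its relevant second derivative is bounded by a constant $M$, giving $|f(t,y(t))-P(t)|\le \tfrac{M}{2}\,|(t-t_{n-1})(t-t_n)|$. Substituting this into the two integrals and pulling $M/2$ outside, the problem becomes the purely analytic estimation of
\begin{equation*}
\frac{M}{2\Gamma(\alpha)}\int_0^{t_{n+1}}(t_{n+1}-t)^{\alpha-1}\,|(t-t_{n-1})(t-t_n)|\,dt
\end{equation*}
together with its companion over $[0,t_n]$.

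To evaluate these I would substitute $y=t_{n+1}-t$ (respectively $y=t_n-t$) to expose the factor $y^{\alpha-1}$, expand the quadratic in $y$, and integrate term by term using $\int_0^{t_k}y^{\alpha-1+j}\,dy = t_k^{\alpha+j}/(\alpha+j)$. Writing $t_k=kh$ then converts every power into a power of $h$ times a power of $k$, from which the common factor $h^{3+\alpha}$ and the normalising constant $1/(12\,\Gamma(\alpha+1))$ are read off, together with the growth factor $(n+1)^\alpha+n^2$ arising from the two upper limits $t_{n+1}$ and $t_n$. Collecting the two contributions yields the stated bound $R_n^\alpha<\frac{h^{3+\alpha}M}{12\Gamma(\alpha+1)}\{(n+1)^\alpha+n^2\}$.

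The main obstacle is this final step: the kernel $(t_{n+1}-t)^{\alpha-1}$ is weakly singular and the interpolation weight $(t-t_{n-1})(t-t_n)$ changes sign on the range, so obtaining a clean one–sided bound with the precise constant $\tfrac{1}{12}$ requires care—either by bounding $|(t-t_{n-1})(t-t_n)|$ by its maximal subinterval value and summing, or by retaining the exact integrals and discarding lower–order terms. Keeping explicit track of the reduction $\Gamma(\alpha+1)=\alpha\Gamma(\alpha)$ and of the separate powers $(n+1)^\alpha$ and $n^\alpha$ (which the stated factor records as $(n+1)^\alpha+n^2$) is where the bookkeeping is most delicate, and it is worth checking whether the $n^2$ is in fact the intended $n^\alpha$.
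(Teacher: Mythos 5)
Your high-level plan --- identify $R_n^\alpha$ with the interpolation error integrated against the two kernels, then bound each piece by a triangle inequality --- is the same as the paper's, and your closing suspicion is correct: the paper's own proof produces $(n+1)^\alpha+n^\alpha$, so the $n^2$ in the statement is a typo. But your execution diverges from the paper at exactly the point that generates the stated constants, and your final step genuinely fails. The paper does not use the two-point (linear) remainder with $\|f''\|_\infty\le M$: it assumes $\|f^{(3)}(t,y(t))\|_\infty<M$, writes the remainder as $\frac{f^{(3)}(\xi)}{3!}\prod_{i=0}^{2}(t-t_i)$, bounds the nodal product in sup norm by $\frac{2h^3}{4}=\frac{h^3}{2}$, pulls that constant out of the integral, and only then integrates the bare kernel, $\int_0^{t_{n+1}}(t_{n+1}-t)^{\alpha-1}dt=t_{n+1}^\alpha/\alpha$. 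That is precisely where both the factor $h^3$ and the constant $\frac{1}{12}=\frac{1}{2\cdot 3!}$ come from, yielding $\frac{h^3M}{12\Gamma(\alpha+1)}\left(t_{n+1}^\alpha+t_n^\alpha\right)$ and hence the exponents $(n+1)^\alpha+n^\alpha$.

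With your choice --- weight $(t-t_{n-1})(t-t_n)$ and prefactor $M/2$ --- the quantity you reduce the problem to is not bounded by the target at all, so no amount of bookkeeping will ``read off'' $h^{3+\alpha}/(12\Gamma(\alpha+1))$. The reason is the global-versus-local issue you yourself flagged: $|(t-t_{n-1})(t-t_n)|$ is $O(h^2)$ only near the nodes, while near $t=0$ it has size $t_{n-1}t_n\sim n^2h^2$. Concretely, after your substitution $u=t_{n+1}-t$ the integral becomes $\int_0^{t_{n+1}}u^{\alpha-1}\left|(h-u)(2h-u)\right|du \ge \int_{2h}^{t_{n+1}}u^{\alpha-1}(u-h)(u-2h)\,du$, which grows like $t_{n+1}^{\alpha+2}/(\alpha+2)=h^{\alpha+2}(n+1)^{\alpha+2}/(\alpha+2)$; its ratio to the target $h^{3+\alpha}(n+1)^\alpha$ blows up like $(n+1)^2/h$. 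Retaining the exact integrals does not help, since this is the true order of the exact integral. To reproduce the theorem as the paper proves it you must (i) strengthen the hypothesis to a bound $M$ on the \emph{third} derivative, (ii) use the cubic nodal product with the local sup bound $h^3/2$, and (iii) integrate the kernel separately. (Be aware that the paper itself applies that local product bound over all of $[0,t_{n+1}]$, i.e., it commits the very defect you identified; your honest evaluation exposes the problem rather than reproducing the paper's estimate.) A minor further difference: the paper's proof ends with an additional estimate of $\|y_{n+1}-y_n\|_\infty$, which your proposal omits, though it is not needed for the remainder bound itself.
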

\begin{proof}
	Following the derivation presented earlier, we have
	\begin{equation*}
	y(t_{n+1})=y(t_n)+\frac{1}{\Gamma(\alpha)}\int_{0}^{t_{n+1}}(t_{n+1}-t)^{\alpha-1}f(t,y(t))dt-\frac{1}{\Gamma(\alpha)}\int_{0}^{t_n}(t_{n+1}-t)^{\alpha-1} f(t,y(t))dt.
	\end{equation*}
	Using the Lagrange polynomial, we have
	\begin{eqnarray}
	y(t_{n+1})-y(t_n)&=&\frac{1}{\Gamma(\alpha)}\int_{0}^{t_{n+1}}L_1(t)(t_{n+1}-t)^{\alpha-1}dt + \frac{1}{\Gamma(\alpha)}\int_{0}^{t_{n+1}}R_1(t)(t_{n+1}-t)^{\alpha-1}dt\nonumber\\
	&&-\frac{1}{\Gamma(\alpha)}\int_{0}^{t_{n}}L_2(t)(t_{n}-t)^{\alpha-1}dt + \frac{1}{\Gamma(\alpha)}\int_{0}^{t_{n}}R_2(t)(t_{n}-t)^{\alpha-1}dt\nonumber\\
	&=& \frac{f(t_n,y_n)}{h\Gamma(\alpha)}\left\{\frac{2h}{\alpha}t^\alpha_{n+1}-\frac{t_{n+1}^{\alpha+1}}{\alpha+1}+\frac{ht^\alpha}{\alpha}-\frac{t^{\alpha+1}}{\alpha+1}\right\}+\frac{f(t_{n-1},y_{n-1})}{h\Gamma(\alpha)}\nonumber\\
	&&\times \left\{\frac{ht_{n+1}^\alpha}{\alpha}-\frac{t_{n+1}^{\alpha+1}}{\alpha+1}+\frac{t_n^\alpha}{\alpha+1}\right\}+\frac{1}{\Gamma(\alpha)}\int_{0}^{t_{n+1}}R(t)(t_{n+1}-t)^{\alpha-1}dt\nonumber\\
	&& - \frac{1}{\Gamma(\alpha)}\int_{0}^{t_{n}}R(t)(t_{n}-t)^{\alpha-1}dt.
	\end{eqnarray}
	
	Next, we let
	$$R_n(t)=\frac{1}{\Gamma(\alpha)}\int_{0}^{t_{n+1}}R_1(t)(t_{n+1}-t)^{\alpha-1}dt-\frac{1}{\Gamma(\alpha)}\int_{0}^{t_{n}}R_1(t)(t_{n}-t)^{\alpha-1}dt$$
	where
	$$R_1(t)=\frac{f^{(n+1)}(t,y(t))}{(n+1)!}\prod_{i=0}^{n}(t-t_i)$$
	\begin{eqnarray*}
	R_n(t)&=&\frac{1}{\Gamma(\alpha)}\int_{0}^{t_{n+1}}\frac{f^{(n+1)}(t,y(t))}{(n+1)!}\prod_{i=0}^{n}(t-t_i)(t_{n+1}-t)^{\alpha-1}dt\\
	&& -
	\frac{1}{\Gamma(\alpha)}\int_{0}^{t_{n}}\frac{f^{(n+1)}(t,y(t))}{(n+1)!}\prod_{i=0}^{n}(t-t_i)(t_{n}-t)^{\alpha-1}dt.
	\end{eqnarray*}
	
	Let $\|f(t)\|_\infty=\sup_{t\in[a,b]}|f(t)|$, we assume that
	$$\|f^{(3)}(t,y(t))\|_\infty<M<\infty$$
	then
	\begin{eqnarray*}
		\|R_n(t)\|_\infty&=&\left\|\frac{1}{\Gamma(\alpha)}\int_{0}^{t_{n+1}}\frac{f^{(3)}(t,y(t))}{(n+1)!}\prod_{i=0}^{n}(t-t_i)(t_{n+1}-t)^{\alpha-1}dt\right.\\
		&&\left. -
		\frac{1}{\Gamma(\alpha)}\int_{0}^{t_{n}}\frac{f^{(n+1)}(t,y(t))}{(n+1)!}\prod_{i=0}^{n}(t-t_i)(t_{n}-t)^{\alpha-1}dt\right\|_\infty
	\end{eqnarray*}
	
	\begin{eqnarray*}
		\|R_n(t)\|_\infty&\le&\left\|\frac{1}{\Gamma(\alpha)}\int_{0}^{t_{n+1}}\frac{f^{(n+1)}(t,y(t))}{(n+1)!}\prod_{i=0}^{n}(t-t_i)(t_{n+1}-t)^{\alpha-1}dt\right.\\
		&&\left. -
		\frac{1}{\Gamma(\alpha)}\int_{0}^{t_{n}}\frac{f^{(n+1)}(t,y(t))}{(n+1)!}\prod_{i=0}^{n}(t-t_i)(t_{n}-t)^{\alpha-1}dt\right\|_\infty.
	\end{eqnarray*}
Without loss of generality, we evaluate
\begin{eqnarray*}
&&\left\|\frac{1}{6\Gamma(\alpha)}\int_{0}^{t_{n+1}}f^{(3)}(t,y(t)) \prod_{i=0}^{2}(t-t_i)(t_n-t)^{\alpha-1}dt   \right\|_\infty\\
&&\hspace{2cm}\le \frac{1}{6\Gamma(\alpha)}\int_{0}^{t_{n+1}}\left\|f^{(3)}(t,y(t)) \right\|_\infty \left\|\prod_{i=0}^{2}(t-t_i) \right\| (t_{n+1}-t)^{\alpha-1}dt \\
&&\hspace{2cm}\le \frac{1}{6\Gamma(\alpha)}\left\|f^{(3)}(t,y(t)) \right\|_\infty \left\|\prod_{i=0}^{2}(t-t_i) \right\| \int_{0}^{t_{n+1}}(t_{n+1}-t)^{\alpha-1}dt \\
&&\hspace{2cm}\le\frac{2h^3}{4\times 6\Gamma(\alpha)} \left\|f^{(3)}(t,y(t)) \right\|_\infty \frac{t_{n+1}^\alpha}{\alpha}\\
&&\hspace{2cm}<\frac{h^3t_{n+1}^\alpha M}{12\Gamma(\alpha+1)},
\end{eqnarray*}
therefore,
$$\left\|\frac{1}{\Gamma(\alpha)}\int_{0}^{t_{n+1}}\frac{f^{(n+1)}(t,y(t))}{(n+1)!} \prod_{i=0}^{n}(t-t_i)(t_{n+1}-t)^{\alpha-1}dt \right\|_\infty<\frac{h^3t_{n+1}^\alpha M}{12\Gamma(\alpha+1)}$$
and
$$\left\|\frac{1}{\Gamma(\alpha)}\int_{0}^{t_{n}}\frac{f^{(n+1)}(t,y(t))}{(n+1)!} \prod_{i=0}^{n}(t-t_i)(t_{n}-t)^{\alpha-1}dt \right\|_\infty<\frac{h^3t_{n}^\alpha M}{12\Gamma(\alpha+1)}$$	
thus
\begin{eqnarray*}
\|R(t)\|_\infty&<&\frac{h^3M}{12\Gamma(\alpha+1)}(t^\alpha_{n+1}+t^\alpha_n)\\
&<&\frac{h^{3+\alpha}M}{12\Gamma(\alpha+1)}\left((n+1)^\alpha+n^\alpha\right)
\end{eqnarray*}
Again, we evaluate
\begin{eqnarray*}
\|y_{n+1}-y_n\|_\infty&=&\left\|\frac{1}{\Gamma(\alpha)}\int_{0}^{t_{n+1}}(t_{n+1}-t)^{\alpha-1}f(t,y(t))dt\right.\\
&& \left.- \frac{1}{\Gamma(\alpha)}\int_{0}^{t_{n}}(t_{n}-t)^{\alpha-1}f(t,y(t))dt \right\|_\infty\\
&\le& \left\|\frac{1}{\Gamma(\alpha)}\int_{0}^{t_{n+1}}(t_{n+1}-t)^{\alpha-1}f(t,y(t))dt\right\|_\infty+\\
&& \left\| \frac{1}{\Gamma(\alpha)}\int_{0}^{t_{n}}(t_{n}-t)^{\alpha-1}f(t,y(t))dt \right\|_\infty.
\end{eqnarray*}
Without loss of generality, we focus on
\begin{eqnarray*}
&&\left\|\frac{1}{\Gamma(\alpha)}\int_{0}^{t_{n+1}}(t_{n+1}-t)^{\alpha-1}f(t,y(t))dt\right\|_\infty\\
&&\hspace{2cm}\le\left\|\frac{1}{\Gamma(\alpha)}\sum_{j=0}^{n}\int_{t_j}^{t_{j+1}}(t_{n+1}-t)^{\alpha-1} f(t_j,y(t_j))dt\right\|_\infty\\
&&\hspace{2cm}\le \frac{1}{\Gamma(\alpha)}\sum_{j=0}^{n}\int_{t_j}^{t_{j+1}}(t_{n+1}-t)^{\alpha-1} \left\| f(t_j,y(t_j))dt\right\|_\infty\\
&&\hspace{2cm}\le \frac{1}{\Gamma(\alpha)} \max_{t\in[t_j,t_{j+1}]} \|f(t_j,y(t_j))\|\sum_{j=0}^{n} \int_{t_j}^{t_{j+1}}(t_{n+1}-t)^{\alpha-1}dt\\
&&\hspace{2cm}\le \frac{1}{\Gamma(\alpha)} \max_{t\in[t_j,t_{j+1}]} \|f(t_j,y(t_j))\|\sum_{j=0}^{n}\left\{\frac{(t_{n+1}-t_j)^\alpha}{\alpha}-\frac{(t_{n+1}-t_{j+1})^\alpha}{\alpha}\right\}\\
&&\hspace{2cm}\le \frac{h^\alpha}{\Gamma(\alpha+1)} \max_{t\in[t_j,t_{j+1}]} \|f(t_j,y(t_j))\|\sum_{j=0}^{n}\left\{(n+1-j)^\alpha-(n-j)^\alpha \right\}\\
&&\hspace{2cm}\le \frac{h^\alpha}{\Gamma(\alpha+1)} \max_{t\in[t_j,t_{j+1}]} \|f(t_j,y(t_j))\|\left\{(n+1)^\alpha+1\right\}\\
&&\hspace{2cm}\le \frac{h^\alpha}{\Gamma(\alpha+1)} \max_{t\in[t_j,t_{j+1}]} \|f(t,y(t))\|\left\{(n+1)^\alpha+1\right\}
\end{eqnarray*}
Thus
$$\|\frac{1}{\Gamma(\alpha)}\int_{0}^{t_{n+1}}(t_{n+1}-t)^{\alpha-1}f(t,y(t))\|<\frac{h^\alpha C_n^\alpha}{\Gamma(\alpha+1)}\max_{t\in[t_j,t_{j+1}]} \|f(t,y(t))\|$$ and
$$\|\frac{1}{\Gamma(\alpha)}\int_{0}^{t_{n}}(t_{n}-t)^{\alpha-1}f(t,y(t))\|<\frac{h^\alpha S_n^\alpha}{\Gamma(\alpha+1)}\max_{t\in[t_j,t_{j+1}]} \|f(t,y(t))\|$$
Therefore
$$\|y_{n+1}-y_n\|_\infty <\frac{h^\alpha}{\Gamma(\alpha+1)}\max_{t\in[t_j,t_{j+1}]} \|f(t,y(t))\|\left\{C_n^\alpha +S_n^\alpha \right\}.$$
The proof is completed.
\end{proof}

\subsection{The Caputo-Fabrizio fractional derivative }
We next consider the following general fractional differential equation with fading memory included via the Caputo-Fabrizio fractional derivative. That is,
\begin{equation}\label{cf1}
_0^{CF}\mathsf{D}_t^\alpha y(t)=f(t,y(t))
\end{equation}
or
\begin{equation}\label{cf2}
\frac{M(\alpha)}{1-\alpha}\int_{0}^{t}y'(\tau)\exp\left[-\frac{\alpha}{1-\alpha}(t-\tau)\right]d\tau=f(t,y(t)).
\end{equation}
Using the fundamental theorem of calculus, we convert the above to
\begin{equation}\label{cf3}
y(t)-y(0)=\frac{1-\alpha}{M(\alpha)}f(t,y(t))+\frac{\alpha}{M(\alpha)}\int_{0}^{t}f(\tau,y(\tau))d\tau
\end{equation}
so that
\begin{equation}\label{cf4}
y(t_{n+1})-y(0)=\frac{1-\alpha}{M(\alpha)}f(t_n,y(t_n))+\frac{\alpha}{M(\alpha)}\int_{0}^{t_{n+1}}f(t,y(t))dt
\end{equation}
and
\begin{equation}\label{cf5}
y(t_{n})-y(0)=\frac{1-\alpha}{M(\alpha)}f(t_{n-1},y(t_{n-1}))+\frac{\alpha}{M(\alpha)}\int_{0}^{t_{n}}f(t,y(t))dt
\end{equation}
On removing (\ref{cf5}) from (\ref{cf4}) we obtain
\begin{equation}\label{cf6}
y(t_{n+1})-y(t_n)=\frac{1-\alpha}{M(\alpha)}\left\{f(t_n,y_n)-f(t_{n-1},y_{n-1})\right\}+\frac{\alpha}{M(\alpha)}\int_{t_n}^{t_{n+1}}f(t,y(t))dt
\end{equation}
where
\begin{eqnarray}
\int_{t_n}^{t_{n+1}}f(t,y(t))dt&=&\int_{t_n}^{t_{n+1}}\left\{\frac{f(t_n,y_n)}{h}(t-t_{n-1})-\frac{f(t_{n-1},y_{n-1})}{h}(t-t_{n})\right\}dt\nonumber\\
&=&\frac{3h}{2}{f(t_n,y_n)}-\frac{h}{2}f(t_{n-1},y_{n-1}).
\end{eqnarray}
Thus,
\begin{eqnarray*}
y(t_{n+1})-y(t_n)&=&\frac{1-\alpha}{M(\alpha)}=\left[f(t_n,y_n)-f(t_{n-1},y_{n-1})\right]+ \frac{3\alpha h}{2M(\alpha)}{f(t_n,y_n)}\\
&&-\frac{\alpha h}{2M(\alpha)}f(t_{n-1},y_{n-1})
\end{eqnarray*}
which implies that
\begin{equation*}
y(t_{n+1})-y(t_n)=\left(\frac{1-\alpha}{M(\alpha)}+\frac{3\alpha h}{2M(\alpha)}\right)f(t_n,y_n)+ \left(\frac{1-\alpha}{M(\alpha)}+\frac{\alpha h}{2M(\alpha)}\right)f(t_{n-1},y_{n-1}).
\end{equation*}
Hence,
\begin{equation}
y_{n+1}=y_n+\left(\frac{1-\alpha}{M(\alpha)}+\frac{3\alpha h}{2M(\alpha)}\right)f(t_n,y_n)+ \left(\frac{1-\alpha}{M(\alpha)}+\frac{\alpha h}{2M(\alpha)}\right)f(t_{n-1},y_{n-1})
\end{equation}
which is the corresponding two-step Adams-Bashforth method for the Caputo-Fabrizio fractional derivative. The following theorems present the convergence and stability results.

\begin{Theorem}
	Let $y(t)$ be a solution of $_0^{CF}\mathsf{D}_t^\alpha y(t)=f(t,y(t))$ where $f$ is a continuous function bounded for the Caputo-Fabrizio fractional derivative, we have
\begin{equation*}
y_{n+1}=y_n+\left(\frac{1-\alpha}{M(\alpha)}+\frac{3\alpha h}{2M(\alpha)}\right)f(t_n,y_n)+ \left(\frac{1-\alpha}{M(\alpha)}+\frac{\alpha h}{2M(\alpha)}\right)f(t_{n-1},y_{n-1}) + R^n_\alpha
\end{equation*}
where $\|R^n_\alpha\|\le M$.
\end{Theorem}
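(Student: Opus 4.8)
The plan is to begin from the exact identity (\ref{cf6}), in which no approximation has yet been introduced, and to isolate the single source of error: the replacement of $f(t,y(t))$ by its linear Lagrange interpolant $P(t)$ through the nodes $t_{n-1}$ and $t_n$ on the integration interval $[t_n,t_{n+1}]$. The finite-difference part of (\ref{cf6}), namely the combination carrying the factor $\frac{1-\alpha}{M(\alpha)}$, is reproduced without any error, so the remainder of the scheme is exactly the integral of the interpolation error,
\begin{equation*}
R_\alpha^n=\frac{\alpha}{M(\alpha)}\int_{t_n}^{t_{n+1}}\bigl[f(t,y(t))-P(t)\bigr]\,dt.
\end{equation*}
The whole task then reduces to bounding this one integral.

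Next I would invoke the standard error formula for linear Lagrange interpolation: for two nodes there exists $\xi=\xi(t)$ in the smallest interval containing $t_{n-1},t_n,t$ such that
\begin{equation*}
f(t,y(t))-P(t)=\frac{1}{2}\,\frac{d^2}{dt^2}\bigl[f(t,y(t))\bigr]\Big|_{t=\xi}(t-t_{n-1})(t-t_n).
\end{equation*}
Taking the supremum norm and pulling the (assumed bounded) second derivative outside the integral gives
\begin{equation*}
\|R_\alpha^n\|\le\frac{\alpha}{2M(\alpha)}\,\Bigl\|\tfrac{d^2}{dt^2}f(t,y(t))\Bigr\|_\infty\int_{t_n}^{t_{n+1}}\bigl|(t-t_{n-1})(t-t_n)\bigr|\,dt.
\end{equation*}
The remaining integral is elementary: with the substitution $s=t-t_n\in[0,h]$ one has $t-t_{n-1}=s+h$ and $t-t_n=s$, so the integrand equals $(s+h)s\ge0$ and
\begin{equation*}
\int_{t_n}^{t_{n+1}}(t-t_{n-1})(t-t_n)\,dt=\int_0^h\!\bigl(s^2+hs\bigr)\,ds=\frac{h^3}{3}+\frac{h^3}{2}=\frac{5h^3}{6}.
\end{equation*}
Combining the two estimates yields $\|R_\alpha^n\|\le\frac{5\alpha h^3}{12\,M(\alpha)}\bigl\|\tfrac{d^2}{dt^2}f\bigr\|_\infty$, which is finite; absorbing all constants into a single quantity $M$ gives $\|R_\alpha^n\|\le M$ as claimed.

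I expect the genuine obstacle to be the boundedness of $\tfrac{d^2}{dt^2}f(t,y(t))$, since $f(t,y(t))$ is a composite function whose second time-derivative, by the chain rule, involves $\partial_t^2 f$, $\partial_y f\,y''$, $\partial_{yy}f\,(y')^2$ and mixed terms. The bare hypothesis that \emph{$f$ is continuous and bounded} does not by itself control these: one really needs $f\in C^2$ with bounded partial derivatives, together with bounds on $y'$ and $y''$. Because $y$ solves $_0^{CF}\mathsf{D}_t^\alpha y=f$, the derivatives of $y$ can in principle be expressed through the kernel representation (\ref{cf3}) and the estimate closed, but making this step rigorous is the delicate point. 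The constant $M$ in the statement should therefore be read as depending on bounds for $f$ and its first two derivatives evaluated along the solution.
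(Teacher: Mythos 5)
Your proof is correct in substance but follows a genuinely different route from the paper's. You define the remainder as the true quadrature error committed on $[t_n,t_{n+1}]$,
\begin{equation*}
R_\alpha^n=\frac{\alpha}{M(\alpha)}\int_{t_n}^{t_{n+1}}\bigl[f(t,y(t))-P(t)\bigr]\,dt,
\end{equation*}
and control it with the classical two-point Lagrange remainder formula, arriving at the clean local truncation bound $\frac{5\alpha h^3}{12M(\alpha)}\bigl\|\tfrac{d^2}{dt^2}f(t,y(t))\bigr\|_\infty$. The paper does something else entirely: it writes $f$ on $[t_n,t_{n+1}]$ as if it coincided with the Lagrange interpolating polynomial through \emph{all} the nodes $t_0,\dots,t_n$, declares the error term to be the contribution of the basis terms attached to the older nodes,
\begin{equation*}
R_\alpha^n=\frac{\alpha}{M(\alpha)}\int_{t_n}^{t_{n+1}}\sum_{i=2}^{n}\prod_{i=2}^{n}\frac{(t-t_i)}{h(-1)^i}f(t_i,y_i)\,dt,
\end{equation*}
and bounds this using only $\sup_i|f(t_i,y_i)|\le M$, ending with $\|R_\alpha^n\|_\infty<\frac{\alpha}{M(\alpha)}(n+1)!\,h^{n+1}M$. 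Each route has a price. Yours needs $t\mapsto f(t,y(t))$ to be $C^2$ with a controlled second derivative, which, as you correctly flag, is strictly stronger than the stated hypothesis that $f$ is continuous and bounded; that extra smoothness assumption would simply have to be added to the statement. The paper's route nominally uses only boundedness of $f$, but it rests on the false premise that the long interpolant reproduces $f$ exactly (so its $R_\alpha^n$ is not the actual error of the scheme), and its bound $(n+1)!\,h^{n+1}M$ grows with $n$ at fixed $h$, so it does not actually establish $\|R_\alpha^n\|\le M$ uniformly; your version is the mathematically sound one and is the standard derivation of the Adams--Bashforth truncation error. One further point worth recording: carried to completion, your argument gives the coefficient of $f(t_{n-1},y_{n-1})$ with a \emph{minus} sign, namely $-\left(\frac{1-\alpha}{M(\alpha)}+\frac{\alpha h}{2M(\alpha)}\right)$, since $\int_{t_n}^{t_{n+1}}P(t)\,dt=\frac{3h}{2}f(t_n,y_n)-\frac{h}{2}f(t_{n-1},y_{n-1})$; the plus sign appearing in the theorem and in the paper's derivation is a sign slip, which your decomposition quietly corrects.
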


\begin{proof}
According to the definition of the Caputo-Fabrizio derivative, we have
\begin{equation}
y(t)-y(0)=\frac{1-\alpha}{M(\alpha)}f(t,y(t))+\frac{\alpha}{M(\alpha)}\int_{0}^{t}f(\tau,y(\tau))d\tau.
\end{equation} 	
At point $t_{n+1}$, we get
$$y(t_{n+1})= \frac{1-\alpha}{M(\alpha)}f(t_n,y(t_n))+\frac{\alpha}{M(\alpha)}\int_{0}^{t_{n+1}}f(t,y(t))dt,$$
also, at point $t_n$ we have
$$y(t_{n})= \frac{1-\alpha}{M(\alpha)}f(t_{n-1},y(t_{n-1}))+\frac{\alpha}{M(\alpha)}\int_{0}^{t_{n}}f(t,y(t))dt.$$
Thus,
\begin{eqnarray}
y(t_{n+1})-y(t_n)&=&\frac{1-\alpha}{M(\alpha)}\left\{f(t_n,y_n)-f(t_{n-1},y(t_{n-1})) \right\} +\frac{\alpha}{M(\alpha)}\int_{t_n}^{t_{n+1}}f(t,y(t))dt\nonumber\\
&=& \frac{1-\alpha}{M(\alpha)}\left\{f(t_n,y_n)-f(t_{n-1},y(t_{n-1})) \right\} + \frac{\alpha}{M(\alpha)}\left\{\frac{f(t_n,y_n)}{h} (t-t_{n-1})\right.\nonumber\\
&&\left.-\frac{f(t_{n-1},y_{n-1})}{h}(t-t_n)+\sum_{i=2}^{n}\prod_{i=2}^{n}\frac{(t-t_i)}{h(-1)^i}f(t_i,y_i) \right\}dt
\end{eqnarray}
so that
\begin{eqnarray}
y_{n+1}&=&y_n+\left(\frac{1-\alpha}{M(\alpha)}+\frac{3\alpha h}{2M(\alpha)}\right)f(t_n,y_n)+ \left(\frac{1-\alpha}{M(\alpha)}+\frac{\alpha h}{M(\alpha)}\right)f(t_{n-1},y_{n-1})\nonumber\\
&&+\underbrace{\frac{\alpha}{M(\alpha)}\int_{t_n}^{t_{n+1}}\sum_{i=2}^{n}\prod_{i=2}^{n}\frac{(t-t_i)}{h(-1)^i} f(t_i,y_i)dt}_{\text{error term}}
\end{eqnarray}

We denote the above error term by
$$R_\alpha^n=\frac{\alpha}{M(\alpha)}\int_{t_n}^{t_{n+1}}\sum_{i=2}^{n}\prod_{i=2}^{n}\frac{(t-t_i)}{h(-1)^i} f(t_i,y_i)dt$$
Thus
\begin{eqnarray}
\left\| R_\alpha^n \right\|_\infty&=& \frac{\alpha}{M(\alpha)}\left\|\int_{t_n}^{t_{n+1}}\sum_{i=2}^{n}\prod_{i=2}^{n}\frac{(t-t_i)}{h(-1)^i} f(t_i,y_i)dt \right\|_\infty\nonumber\\
&\le& \frac{\alpha}{M(\alpha)}\int_{t_n}^{t_{n+1}}\left\|\sum_{i=2}^{n}\prod_{i=2}^{n}\frac{(t-t_i)}{h(-1)^i} f(t_i,y_i)dt \right\|_\infty\nonumber\\
&\le& \frac{\alpha}{M(\alpha)} \int_{t_n}^{t_{n+1}}\sum_{i=2}^{n}\prod_{i=2}^{n} \left|\frac{(t-t_i)}{h(-1)^i}\right| \left\|f(t_i,y_i)dt \right\|_\infty \nonumber\\
&<& \frac{\alpha}{M(\alpha)} \int_{t_n}^{t_{n+1}}\sum_{i=2}^{n}\prod_{i=2}^{n} \frac{\left|(t-t_i)\right|}{h}\sup_{i\in I}\left\{\max_{i\in I} |f(t_i,y_i)|\right\}\nonumber\\
&<&\frac{\alpha}{M(\alpha)}\frac{(n+1)!h^{n+1}}{4}M.
\end{eqnarray}
Hence,
$$\left\| R_\alpha^n \right\|_\infty <\frac{\alpha}{M(\alpha)}(n+1)!h^{n+1}M.$$
\end{proof}

\begin{Theorem}
	Let $y(t)$ be a solution of $_0^{CF}\mathsf{D}_t^\alpha y(t)=f(t,y(t))$, for every $n\in N$
	$$\|y_{n+1}-y_n\|_\infty < \frac{1-\alpha}{M(\alpha)}\left\|f(t_n,y_n)-f(t_{n-1},y_{n-1}) \right\|_\infty +\frac{\alpha h^{n+1}(n+1)!}{4M(\alpha)}$$
	such that if $\|f(t_n,y_n)-f(t_{n-1},y_{n-1})\|_\infty \rightarrow 0$ as $n\rightarrow \infty$, then $\|y_{n+1}-y_n\|_\infty \rightarrow 0$ as $n\rightarrow \infty$.
\end{Theorem}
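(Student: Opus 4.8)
The plan is to start from the exact one-step identity~(\ref{cf6}),
\begin{equation*}
y(t_{n+1})-y(t_n)=\frac{1-\alpha}{M(\alpha)}\left\{f(t_n,y_n)-f(t_{n-1},y_{n-1})\right\}+\frac{\alpha}{M(\alpha)}\int_{t_n}^{t_{n+1}}f(t,y(t))\,dt,
\end{equation*}
which holds with equality, so no approximation is incurred at this stage. I would apply $\|\cdot\|_\infty$ to both sides and use the triangle inequality to split the right-hand side into the non-local contribution carrying $\frac{1-\alpha}{M(\alpha)}$ and the integral contribution carrying $\frac{\alpha}{M(\alpha)}$. The first contribution reproduces verbatim the leading term $\frac{1-\alpha}{M(\alpha)}\|f(t_n,y_n)-f(t_{n-1},y_{n-1})\|_\infty$ of the claimed inequality, so the entire estimate reduces to bounding $\frac{\alpha}{M(\alpha)}\bigl\|\int_{t_n}^{t_{n+1}}f(t,y(t))\,dt\bigr\|_\infty$.

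To reproduce the second term exactly as stated, I would inherit the Lagrange-interpolation estimate from the preceding theorem: represent $f(t,y(t))$ on $[t_n,t_{n+1}]$ through the nodes $t_2,\dots,t_n$, pull the supremum norm inside the integral, bound the nodal product $\prod_{i=2}^{n}|t-t_i|$ uniformly on the step, and invoke the boundedness of $f$. The constant $\tfrac14$, the factor $(n+1)!$ from the interpolation remainder, and the factor $h^{n+1}$ from integrating the nodal terms over a window of width $h$ then combine to give $\frac{\alpha h^{n+1}(n+1)!}{4M(\alpha)}$. Adding the two pieces produces the displayed inequality, and the stability assertion is set up by letting $\|f(t_n,y_n)-f(t_{n-1},y_{n-1})\|_\infty\to0$, which annihilates the first term.

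The step I expect to be the real obstacle is controlling this remainder uniformly in $n$. For a fixed step $h$ the quantity $h^{n+1}(n+1)!$ grows without bound as $n\to\infty$, so the displayed second term does \emph{not} by itself tend to zero, and the stated implication cannot be closed from this estimate alone. The clean way around it is to avoid the high-degree interpolant altogether: since~(\ref{cf6}) is exact and $f$ is bounded, the crude estimate $\bigl\|\int_{t_n}^{t_{n+1}}f(t,y(t))\,dt\bigr\|_\infty\le h\sup\|f\|$ already furnishes the genuinely small remainder $\frac{\alpha h}{M(\alpha)}\sup\|f\|=O(h)$, uniform in $n$ and consistent with the fixed two-point interpolant actually used to build the scheme. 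With this sharper bound the right-hand side is dominated by $\frac{1-\alpha}{M(\alpha)}\|f(t_n,y_n)-f(t_{n-1},y_{n-1})\|_\infty$ plus an $O(h)$ term; hence, as the consecutive evaluations $f(t_n,y_n)-f(t_{n-1},y_{n-1})$ become close, $\|y_{n+1}-y_n\|_\infty$ is controlled by a quantity as small as the step size allows, and the convergence conclusion follows rigorously.
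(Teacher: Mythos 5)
Your derivation of the displayed inequality follows exactly the paper's own route: start from the exact identity (\ref{cf6}), apply the triangle inequality so that the $\frac{1-\alpha}{M(\alpha)}$ term survives verbatim, and bound the integral contribution through the Lagrange nodal product, producing $\frac{\alpha h^{n+1}(n+1)!}{4M(\alpha)}$. Up to that point the proposal and the paper coincide.

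Where you depart from the paper is in refusing to accept the final implication, and you are right to do so. The paper's proof ends by simply asserting that $\|f(t_n,y_n)-f(t_{n-1},y_{n-1})\|_\infty\to 0$ forces $\|y_{n+1}-y_n\|_\infty\to 0$, ignoring the fact that for fixed $h$ the remainder $h^{n+1}(n+1)!/4$ \emph{diverges} as $n\to\infty$ (the ratio of consecutive terms is $(n+2)h$, which eventually exceeds $1$), so the displayed bound cannot possibly deliver the stated limit. Your repair --- discarding the high-degree interpolant and using the crude estimate $\bigl\|\int_{t_n}^{t_{n+1}}f(t,y(t))\,dt\bigr\|_\infty\le h\sup\|f\|_\infty$, which is uniform in $n$ --- is the natural fix and is exactly what the exactness of (\ref{cf6}) together with the boundedness of $f$ permits. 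One caution about your closing sentence, though: even with the sharper bound, the second term is a fixed constant of order $h$ that does not decay in $n$, so what you actually obtain is $\limsup_{n\to\infty}\|y_{n+1}-y_n\|_\infty\le \frac{\alpha h}{M(\alpha)}\sup\|f\|_\infty$, i.e.\ smallness proportional to the step size, not a genuine limit of zero. The implication as literally stated in the theorem is therefore not recoverable without an additional hypothesis (for instance $h\to 0$, or $f(t,y(t))\to 0$ along the solution); your version is the honest quantitative substitute for it, whereas the paper's proof papers over the gap entirely.
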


\begin{proof}
	\begin{eqnarray}
	\|y_{n+1}-y_n\|_\infty&=& \left\|\frac{1-\alpha}{M(\alpha)}\left\{f(t_n,y_n)-f(t_{n-1},y_{n-1}) \right\}+\frac{\alpha}{M(\alpha)}\int_{t_n}^{t_{n+1}}f(t,y(t))dt \right\|_\infty\nonumber\\
	&\le& \frac{1-\alpha}{M(\alpha)}\left\|f(t_n,y_n)-f(t_{n-1},y_{n-1}) \right\|_\infty+\frac{\alpha}{M(\alpha)}\left\|\int_{t_n}^{t_{n+1}}f(t,y(t))dt \right\|_\infty\nonumber\\
	&\le& \frac{1-\alpha}{M(\alpha)}\left\|f(t_n,y_n)-f(t_{n-1},y_{n-1}) \right\|_\infty+\frac{\alpha}{M(\alpha)}\int_{t_n}^{t_{n+1}}\left|\sum_{i=0}^{n}\prod_{i=0}^{n}\frac{(t-t_i)}{(-1)h} \right|_\infty dt\nonumber\\
	&<& \frac{1-\alpha}{M(\alpha)}\left\|f(t_n,y_n)-f(t_{n-1},y_{n-1}) \right\|_\infty +\frac{\alpha h^{n+1}(n+1)!}{4M(\alpha)}.
	\end{eqnarray}
	So if $\|f(t_n,y_n)-f(t_{n-1},y_{n-1})\|_\infty \rightarrow 0$ as $n\rightarrow \infty$, then $\|y_{n+1}-y_n\|_\infty \rightarrow 0$ as $n\rightarrow \infty$. This completes the proof.
\end{proof}

\subsection{Atangana-Baleanu fractional derivative in Caputo sense}
Let us consider the following fractional differential equation
\begin{equation}\label{ab1}
_0^{ABC}\mathsf{D}_t^\alpha y(t)=f(t,y(t)).
\end{equation}
Again, we apply the fundamental theorem of calculus to have
\begin{equation}\label{ab2}
y(t)-y(0)=\frac{1-\alpha}{ABC(\alpha)}f(t,y(t))+\frac{\alpha}{ABC(\alpha)\Gamma(\alpha)}\int_{0}^{t}(t-\tau)^{\alpha-1}f(\tau,y(\tau))d\tau.
\end{equation}
At $t_{n+1}$, we have
$$y(t_{n+1})-y(0)=\frac{1-\alpha}{ABC(\alpha)}f(t_n,y_n)+\frac{\alpha}{ABC(\alpha)\Gamma(\alpha)}\int_{0}^{t_{n+1}}(t_{n+1}-\tau)^{\alpha-1}f(t,y(t))dt $$
and at $t_n$ we have
$$y(t_{n})-y(0)=\frac{1-\alpha}{ABC(\alpha)}f(t_{n-1},y_{n-1})+\frac{\alpha}{ABC(\alpha)\Gamma(\alpha)}\int_{0}^{t_{n}}(t_{n}-\tau)^{\alpha-1}f(t,y(t))dt $$
which on subtraction yields
\begin{eqnarray}
y(t_{n+1})-y(t_n)&=&\frac{1-\alpha}{ABC(\alpha)}\left\{f(t_n,y_n)-f(t_{n-1},y_{n-1})\right\}+\frac{\alpha}{ABC(\alpha)\Gamma(\alpha)}\int_{0}^{t_{n+1}}(t_{n+1}-t)^{\alpha-1}f(t,y(t))dt \nonumber\\
&&-  \frac{\alpha}{ABC(\alpha)\Gamma(\alpha)}\int_{0}^{t_{n}}(t_{n}-t)^{\alpha-1}f(t,y(t))dt.
\end{eqnarray}
Therefore,
$$y(t_{n+1})-y(t_n)=\frac{1-\alpha}{ABC(\alpha)}\left\{f(t_n,y_n)-f(t_{n-1},y_{n-1})\right\}+A_{\alpha,1}-A_{\alpha,2}.$$

Without loss of generality, we consider
$$A_{\alpha,1}=\frac{\alpha}{ABC(\alpha)\Gamma(\alpha)}\int_{0}^{t_{n+1}}(t_{n+1}-t)^{\alpha-1}f(t,y(t))dt$$
Again we consider the approximation
\begin{equation}
p(t)=\frac{t-t_{n-1}}{t_n-t_{n-1}}f(t_n,y_n)+ \frac{t-t_{n-1}}{t_{n-1}-t_{n}}f(t_{n-1},y_{n-1})
\end{equation}
thus
\begin{eqnarray}
A_{\alpha,1}&=&\frac{\alpha}{ABC(\alpha)\Gamma(\alpha)}\int_{0}^{t_{n+1}}(t_{n+1}-t)^{\alpha-1}\left\{\frac{t-t_{n-1}}{h}f(t_n,y_n)-\frac{t-t_n}{h}f(t_n,y_n)  \right\}\nonumber\\
&=&\frac{\alpha f(t_n,y_n)}{ABC(\alpha)\Gamma(\alpha)h}\left\{\int_{0}^{t_{n+1}}(t_{n+1}-t)^{\alpha-1}f(t-t_{n-1}) \right\}dt\nonumber\\
&& - \frac{\alpha f(t_{n-1},y_{n-1})}{ABC(\alpha)\Gamma(\alpha)h}\left\{\int_{0}^{t_{n+1}}(t_{n+1}-t)^{\alpha-1}f(t-t_{n-1}) \right\}dt\nonumber\\
&=&\frac{\alpha f(t_n,y_n)}{ABC(\alpha)\Gamma(\alpha)h}\left\{\frac{2h t_{n+1}^\alpha}{\alpha}-\frac{ t^{\alpha+1}_{n+1}}{\alpha+1} \right\} - \frac{\alpha f(t_{n-1},y_{n-1})}{ABC(\alpha)\Gamma(\alpha)h}\left\{\frac{h t_{n+1}^\alpha}{\alpha}-\frac{ t^{\alpha+1}_{n+1}}{\alpha+1} \right\}.
\end{eqnarray}
Similarly, we obtain
\begin{equation}
A_{\alpha,2}=\frac{\alpha f(t_n,y_n)}{ABC(\alpha)\Gamma(\alpha)h} \left\{\frac{h t_{n}^\alpha}{\alpha}-\frac{ t^{\alpha+1}_{n}}{\alpha+1} \right\}-\frac{ f(t_{n-1},y_{n-1})}{ABC(\alpha)\Gamma(\alpha)h}
\end{equation}
thus
\begin{eqnarray}
y(t_{n+1})-y(t_n)&=&\frac{1-\alpha}{ABC(\alpha)} \left\{f(t_n,y_n)-f(t_{n-1},y_{n-1})\right\} +\frac{\alpha f(t_n,y_n)}{ABC(\alpha)\Gamma(\alpha)h}\left\{\frac{2h t_{n+1}^\alpha}{\alpha}-\frac{t^{\alpha+1}_{n+1}}{\alpha+1}\right\}\nonumber\\
&&-\frac{\alpha f(t_{n-1},y_{n-1})}{ABC(\alpha)\Gamma(\alpha)h}\left\{\frac{h t_{n+1}^\alpha}{\alpha}-\frac{t^{\alpha+1}_{n+1}}{\alpha+1}\right\}-
\frac{\alpha f(t_n,y_n)}{ABC(\alpha)\Gamma(\alpha)h}\left\{\frac{h t_{n}^\alpha}{\alpha}-\frac{t^{\alpha+1}_{n}}{\alpha+1}\right\}\nonumber\\
&&+\frac{ f(t_{n-1},y_{n-1})}{ABC(\alpha)\Gamma(\alpha)}t_n^{\alpha+1}.
\end{eqnarray}

\begin{eqnarray}
y_{n+1}&=&y_n + f(t_n,y_n)\left\{\frac{1-\alpha}{ABC(\alpha)}+\frac{\alpha}{ABC(\alpha)h}\left[\frac{2h t_{n+1}^\alpha}{\alpha}-\frac{t^{\alpha+1}_{n+1}}{\alpha+1}  \right]\right.\nonumber\\
&&\left.-\frac{\alpha}{ABC(\alpha)\Gamma(\alpha)h}\left[\frac{ht^\alpha_n}{\alpha}-\frac{t^{\alpha+1}_n}{\alpha+1}\right]\right\} +f(t_{n-1},y_{n-1}) \\
&&\times\left\{\frac{\alpha-1}{ABC(\alpha)}-\frac{\alpha}{h\Gamma(\alpha)ABC(\alpha)}\left[\frac{ht_{n+1}^\alpha}{\alpha}-\frac{t_{n+1}^{\alpha+1}}{\alpha+1}+ \frac{t^{\alpha+1}}{h\Gamma(\alpha)ABC(\alpha)} \right]  \right\}.\nonumber
\end{eqnarray}
The above equation is called two-step Adams-Bashforth scheme for Atangana-Baleanu fractional derivative in the sense of Caputo. In what follows, we give the convergence and stability results.

\begin{Theorem}{\bf(Convergence result)}
	Let $y(t)$ be a solution of
	$$^{ABC}_0\mathsf{D}_t^\alpha y(t)=f(t,y(t))$$
	with $f$ being continuous and bounded, the numerical solution of y(t) is given as
\begin{eqnarray*}
y_{n+1}&=&y_n + f(t_n,y_n)\left\{\frac{1-\alpha}{ABC(\alpha)}+\frac{\alpha}{ABC(\alpha)h}\left[\frac{2h t_{n+1}^\alpha}{\alpha}-\frac{t^{\alpha+1}_{n+1}}{\alpha+1}  \right]\right.\nonumber\\
&&\left.-\frac{\alpha}{ABC(\alpha)\Gamma(\alpha)h}\left[\frac{ht^\alpha_n}{\alpha}-\frac{t^{\alpha+1}_n}{\alpha+1}\right]\right\} +f(t_{n-1},y_{n-1}) \\
&&\times\left\{\frac{\alpha-1}{ABC(\alpha)}-\frac{\alpha}{h\Gamma(\alpha)ABC(\alpha)}\left[\frac{ht_{n+1}^\alpha}{\alpha}-\frac{t_{n+1}^{\alpha+1}}{\alpha+1}+ \frac{t^{\alpha+1}}{h\Gamma(\alpha)ABC(\alpha)} \right]  \right\}+R_\alpha
\end{eqnarray*}
where $\|R_\alpha\|_\infty<M$
\end{Theorem}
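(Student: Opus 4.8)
The plan is to follow verbatim the template established in the Caputo and Caputo--Fabrizio proofs above: convert the equation to its Volterra integral form, split the integrand into a two-point Lagrange interpolant plus an interpolation remainder, integrate the polynomial part against the power-law kernel to recover the stated scheme exactly, and collect everything left over into $R_\alpha$.

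First I would begin from the integral representation (\ref{ab2}), evaluate it at $t_{n+1}$ and at $t_n$, and subtract, arriving at
$$y(t_{n+1})-y(t_n)=\frac{1-\alpha}{ABC(\alpha)}\left\{f(t_n,y_n)-f(t_{n-1},y_{n-1})\right\}+A_{\alpha,1}-A_{\alpha,2},$$
where $A_{\alpha,1}$ and $A_{\alpha,2}$ are the fractional integrals over $[0,t_{n+1}]$ and $[0,t_n]$ already evaluated in the derivation preceding the theorem. Writing $g(t):=f(t,y(t))$, on each integral I would replace $g$ by its linear Lagrange interpolant $p$ through $(t_n,g(t_n))$ and $(t_{n-1},g(t_{n-1}))$ together with the standard error term
$$g(t)-p(t)=R(t)=\frac{1}{2}\,g''(\xi_t)\,(t-t_n)(t-t_{n-1}).$$
Integrating $p$ against the two kernels reproduces precisely the bracketed coefficients multiplying $f(t_n,y_n)$ and $f(t_{n-1},y_{n-1})$ in the statement, so that the full discrepancy between the exact update and the scheme is carried by
$$R_\alpha=\frac{\alpha}{ABC(\alpha)\Gamma(\alpha)}\left[\int_0^{t_{n+1}}(t_{n+1}-t)^{\alpha-1}R(t)\,dt-\int_0^{t_n}(t_n-t)^{\alpha-1}R(t)\,dt\right].$$

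The remaining task is the estimate $\|R_\alpha\|_\infty<M$. Assuming, as in the Caputo proof, that the relevant second derivative is bounded, $\|g''\|_\infty\le M_0<\infty$, I would bound each integral separately by pulling $\sup_t|R(t)|$ outside: on the grid $|(t-t_n)(t-t_{n-1})|$ is controlled by a fixed multiple of $h^2$, while $\int_0^{t_{n+1}}(t_{n+1}-t)^{\alpha-1}\,dt=t_{n+1}^\alpha/\alpha$ and similarly for $t_n$. Applying the triangle inequality exactly as in the treatment of $\|R_n(t)\|_\infty$ earlier then gives
$$\|R_\alpha\|_\infty\le\frac{\alpha}{ABC(\alpha)\Gamma(\alpha)}\cdot\frac{M_0 h^2}{2}\cdot\frac{t_{n+1}^\alpha+t_n^\alpha}{\alpha},$$
which is finite for every $n$; absorbing the constants and the fixed horizon into a single constant yields the claimed $\|R_\alpha\|_\infty<M$.

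The step I expect to be the main obstacle is making the constant bound $\|R_\alpha\|_\infty<M$ genuinely meaningful. The right-hand side above grows like $t_{n+1}^\alpha\sim\bigl((n+1)h\bigr)^\alpha$, so an $n$-independent constant $M$ is only legitimate once one restricts to a fixed finite time horizon $[0,T]$, where $t_{n+1}\le T$ caps the growth; this restriction is implicit in the hypothesis that $f$ is bounded and should be stated explicitly for the constant to carry weight. A secondary subtlety is the cancellation between the two fractional integrals: bounding the difference crudely by the sum of the two absolute integrals (as the Caputo proof does) is enough for boundedness and keeps the argument parallel to the earlier cases, though exploiting the overlap on $[0,t_n]$ would sharpen the estimate if a rate in $h$ were wanted.
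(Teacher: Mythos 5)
Your overall architecture is the same as the paper's: pass to the Volterra representation, evaluate at $t_{n+1}$ and $t_n$ and subtract, replace $f(t,y(t))$ by the two-point Lagrange interpolant plus a remainder, and let $R_\alpha$ collect the two kernel-weighted integrals of that remainder. One difference worth noting: you pair the linear interpolant with its correct error term $\tfrac{1}{2}g''(\xi_t)(t-t_n)(t-t_{n-1})$, whereas the paper's proof pairs the same linear interpolant with the $(n+1)$-point remainder $\frac{f^{(n+1)}(t)}{(n+1)!}\prod_{i=0}^{n}(t-t_i)$ and bounds $\prod_{i=0}^{n}|t-t_i|$ by $n!h^{n+1}/4$; your decomposition is the internally consistent one, since the scheme only ever uses the two nodes $t_{n-1},t_n$.

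There is, however, a genuine flaw in your estimate of $R_\alpha$. You assert that $|(t-t_n)(t-t_{n-1})|$ is ``controlled by a fixed multiple of $h^2$'' on the integration range. That holds only for $t$ within $O(h)$ of the interpolation nodes, i.e.\ on $[t_{n-1},t_{n+1}]$. Your integrals run over all of $[0,t_{n+1}]$, and there the product is maximized at $t=0$, where it equals $t_n t_{n-1}=n(n-1)h^2$. Pulling the supremum out of the integrals therefore gives
$$\|R_\alpha\|_\infty \le \frac{\alpha}{ABC(\alpha)\Gamma(\alpha)}\cdot\frac{M_0\,t_n t_{n-1}}{2}\cdot\frac{t_{n+1}^\alpha+t_n^\alpha}{\alpha},$$
not your displayed bound with $h^2$. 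On a fixed horizon $[0,T]$ this corrected bound is still at most a constant multiple of $M_0 T^{2+\alpha}$, so the theorem's (weak) conclusion $\|R_\alpha\|_\infty<M$ survives, and your closing remark about needing a finite horizon is well taken; but the $h^2$ factor, and any implied $O(h^2)$ smallness of the remainder, is unjustified by this argument. To get a bound that actually decays in $h$ you would have to use the cancellation you explicitly set aside: write the difference as
$$\int_0^{t_n}\left[(t_{n+1}-t)^{\alpha-1}-(t_n-t)^{\alpha-1}\right]R(t)\,dt+\int_{t_n}^{t_{n+1}}(t_{n+1}-t)^{\alpha-1}R(t)\,dt,$$
where only the second term (with $|R(t)|\le M_0 h^2$ on $[t_n,t_{n+1}]$ and kernel integral $h^\alpha/\alpha$) is genuinely $O(h^{2+\alpha})$; the crude triangle inequality over the full interval cannot produce a small remainder.
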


\begin{proof}
\begin{eqnarray}
y_{n+1}-y_n&=&\frac{1-\alpha}{ABC(\alpha)}\left(f(t_n,y_n)-f(t_{n-1},y_{n-1})\right)+\frac{\alpha}{ABC(\alpha)\Gamma(\alpha)}\times\nonumber\\
&&\left[\int_{0}^{t_{n+1}}f(t,y(t))(t_{n+1}-t)^{\alpha-1}dt-\int_{0}^{t_n} f(t,y(t))(t_n-t)^{\alpha-1} dt\right]\nonumber\\
&=& \frac{1-\alpha}{ABC(\alpha)}(f_n-f_{n-1})+\frac{\alpha}{ABC(\alpha)\Gamma(\alpha)}\left\{\left[\int_{0}^{t_{n+1}\frac{t-t_{n-1}}{t_n-t_{n-1}}}f(t_n,y_n)\right.\right.\nonumber\\
&&\left. +\frac{t-t_n}{t_{n-1}-t_n}f(t_{n-1},y_{n-1}) +\frac{f^{n+1}(t)}{(n+1)!}\prod_{i=0}^{n}(t-t_i)\right](t_{n+1}-t)^{\alpha-1}dt\nonumber \\
&&-\int_{0}^{t_n}\left[\frac{t-t_{n-1}}{t_n-t_{n-1}}f(t_n,y_n)+ \frac{t-t_{n}}{t_{n-1}-t_{n}}f(t_{n-1},y_{n-1})+\frac{f^{(n)}(t)}{n!}\prod_{i=0}^{n-1}(t-t_i) \right]  \nonumber\\
&&\left.\times (t_n-t)^{\alpha-1}dt \right\}\nonumber\\
&=&L(t,\alpha,n)+\int_{0}^{t_{n+1}}\frac{f^{(n+1)}(t)}{(n+1)!}\prod_{i=0}^{n}(t-t_i)(t_{n+1}-t)^{\alpha-1}dt\nonumber\\
&&- \int_{0}^{t_{n}}\frac{f^{(n)}(t)}{(n)!}\prod_{i=0}^{n}(t-t_i)(t_{n}-t)^{\alpha-1}dt\nonumber
\end{eqnarray}
which implies that
\begin{equation}
y_{n+1}-y_n=L(t,\alpha,n)+R_\alpha(t)
\end{equation}
where
\begin{eqnarray}
L(t,\alpha,n)&=&f(t_n,y_n)\left\{\frac{1-\alpha}{ABC(\alpha)}+\frac{\alpha}{AB(\alpha)h}\left[\frac{2h t^\alpha_{n+1}}{\alpha}-\frac{t_{n+1}^{\alpha+1}}{\alpha+1}\right]-\frac{\alpha}{h\Gamma(\alpha)ABC(\alpha)} \left[\frac{h t^\alpha_{n}}{\alpha}-\frac{t_{n}^{\alpha}}{\alpha+1}\right]\right\}\nonumber\\
&&+f(t_{n-1},y_{n-1})\left\{\frac{\alpha-}{ABC(\alpha)}-\frac{\alpha}{ABC(\alpha)\Gamma(\alpha)h}\left(\frac{h t^\alpha_{n+1}}{\alpha}-\frac{t^{\alpha+1}_{n+1}}{\alpha+1}+\frac{ t^{\alpha+1}_{n}}{h\Gamma(\alpha)ABC(\alpha)} \right) \right\}\nonumber
\end{eqnarray}
and
\begin{equation*}
R_\alpha(t)=\int_{0}^{t_{n+1}}\frac{f^{n+1}(t)}{(n+1)!}\prod_{i=0}^{n}(t-t_i)(t_{n+1}-t)^{\alpha-1}dt-  \int_{0}^{t_{n}}\frac{f^{n}(t)}{(n)!}\prod_{i=0}^{n-1}(t-t_i)(t_{n}-t)^{\alpha-1}dt.
\end{equation*}
We require to show that
\begin{eqnarray}
\|R_\alpha(t)\|_\infty&=&\left\|\int_{0}^{t_{n+1}}\frac{f^{n+1}(t)}{(n+1)!}\prod_{i=0}^{n}(t-t_i)(t_{n+1}-t)^{\alpha-1}dt-\int_{0}^{t_{n}}\frac{f^{n}(t)}{(n)!}\prod_{i=0}^{n-1}(t-t_i)(t_{n}-t)^{\alpha-1}dt \right\|_\infty\nonumber\\
&<&\left\|\int_{0}^{t_{n+1}}\frac{f^{n+1}(t)}{(n+1)!}\prod_{i=0}^{n}(t-t_i)(t_{n+1}-t)^{\alpha-1}dt\right\|_\infty+\left\|\int_{0}^{t_{n}}\frac{f^{n}(t)}{(n)!}\prod_{i=0}^{n-1}(t-t_i)(t_{n}-t)^{\alpha-1}dt \right\|_\infty\nonumber\\
&<&\max_{t\in[0,t_{n+1}]}\frac{\left|f^{(n+1)}(t)\right|}{(n+1)!}\left\|\prod_{i=0}^{n}(t-t_i) \right\|_\infty\frac{t^\alpha_{n+1}}{\alpha} + \max_{t\in[0,t_{n+1}]}\frac{\left|f^{(n)}(t)\right|}{(n)!}\left\|\prod_{i=0}^{n-1}(t-t_i) \right\|_\infty\frac{t^\alpha_{n}}{\alpha}\nonumber\\
&<&\sup_{t\in[0,t_{n+1}]}\left\{\max_{t\in[0,t_{n+1}]}\frac{\left|f^{(n+1)}(t)\right|}{(n+1)!}, \max_{t\in[0,t_{n+1}]}\frac{\left|f^{(n)}(t)\right|}{(n)!}  \right\}\left(n!\frac{h^{n+1}}{4\alpha}t^\alpha_{n+1} +(n-1)! \frac{h^{n}}{4\alpha}t^\alpha_{n} \right).
\end{eqnarray}
This ends the proof.
\end{proof}

\begin{Theorem}{\bf (Stability condition)}
	If $f$ satisfies a Lipschitz condition, then the required stability condition for Adams-Bashforth method when applied to the Atangana-Baleanu fractional derivative in Caputo sense is achieved if
	$$\left\|f(t_n,y_n)-f(t_{n-1},y_{n-1})\right\|_\infty\rightarrow 0$$ as $n\rightarrow \infty$.
\end{Theorem}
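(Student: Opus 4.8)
The plan is to mirror the structure of the stability argument already carried out for the Caputo--Fabrizio scheme, adapting it to the power-law (Mittag-Leffler) memory kernel of the Atangana--Baleanu case. First I would return to the exact one-step increment produced by the scheme derivation, namely
\begin{equation*}
y_{n+1}-y_n=\frac{1-\alpha}{ABC(\alpha)}\left\{f(t_n,y_n)-f(t_{n-1},y_{n-1})\right\}+\frac{\alpha}{ABC(\alpha)\Gamma(\alpha)}\left[\int_{0}^{t_{n+1}}(t_{n+1}-t)^{\alpha-1}f(t,y(t))dt-\int_{0}^{t_n}(t_n-t)^{\alpha-1}f(t,y(t))dt\right],
\end{equation*}
which splits the increment into a \emph{local} part governed by the consecutive data difference and a \emph{memory} part carried by the two fractional integrals.

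Next I would take the supremum norm of both sides and apply the triangle inequality to isolate these two contributions. The local part is immediately controlled, since
\begin{equation*}
\frac{1-\alpha}{ABC(\alpha)}\left\|f(t_n,y_n)-f(t_{n-1},y_{n-1})\right\|_\infty \longrightarrow 0
\end{equation*}
under the stated hypothesis. For the memory part, using the boundedness of $f$ (which follows from the Lipschitz assumption on any finite interval), I would pull $\|f\|_\infty$ outside each integral and evaluate the elementary integral of the kernel, $\int_{0}^{t_m}(t_m-t)^{\alpha-1}dt=t_m^\alpha/\alpha$, exactly as in the Caputo convergence theorem. A naive separate bound then produces $t_{n+1}^\alpha+t_n^\alpha$, and this is precisely where the subtlety enters.

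The main obstacle is this memory term: bounding the two integrals separately gives $t_{n+1}^\alpha+t_n^\alpha$, which grows without bound and would wrongly suggest instability. The resolution is to keep the \emph{difference} of the integrals intact. Splitting $\int_{0}^{t_{n+1}}=\int_{0}^{t_n}+\int_{t_n}^{t_{n+1}}$ reduces the memory part to the single-step integral $\int_{t_n}^{t_{n+1}}(t_{n+1}-t)^{\alpha-1}f\,dt$, whose kernel integrates to $h^\alpha/\alpha$, plus the kernel-difference integral $\int_{0}^{t_n}\big[(t_{n+1}-t)^{\alpha-1}-(t_n-t)^{\alpha-1}\big]f\,dt$. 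For $\alpha\in(0,1)$ the bracket has constant sign and integrates to $\big(t_{n+1}^\alpha-h^\alpha-t_n^\alpha\big)/\alpha$, so the whole memory part is bounded by $\frac{\|f\|_\infty}{ABC(\alpha)\Gamma(\alpha)}\big(h^\alpha+|t_{n+1}^\alpha-t_n^\alpha|\big)$. The decisive observation is that $t_{n+1}^\alpha-t_n^\alpha=h^\alpha\big((n+1)^\alpha-n^\alpha\big)\to 0$ as $n\to\infty$, so the memory contribution is controlled uniformly in $n$ and in fact reduces to the fixed $O(h^\alpha)$ single-step term. Hence, under the hypothesis that the consecutive data difference tends to zero, the increment $\|y_{n+1}-y_n\|_\infty$ is dominated by a bounded remainder that vanishes as $h\to 0$, which is exactly the required stability condition. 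I expect the honest treatment of the kernel-difference estimate to be the only genuinely delicate step, and, consistently with the preceding theorems, the paper most likely bounds each integral separately and invokes the hypothesis to eliminate the dominant local term.
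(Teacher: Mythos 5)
Your argument is correct, but it is not the paper's argument, and it is in fact the sounder of the two. As you guessed, the paper does bound the two memory integrals \emph{separately} after the triangle inequality; however, instead of your kernel-difference estimate, it substitutes for $f(t,y(t))$ inside each integral a Lagrange-type expansion $\sum_{i=0}^{n}\prod_{0\le i\le n}\frac{(t-t_i)}{(-1)^ih}f(t_i,y_i)$ over \emph{all} grid nodes, bounds the node-polynomial products by quantities of the form $n!h^n/4$, and concludes with
\[
\left\|y_{n+1}-y_n\right\|_\infty<\frac{1-\alpha}{ABC(\alpha)}\left\|f(t_n,y_n)-f(t_{n-1},y_{n-1})\right\|_\infty+\frac{Mn!h^n}{4\alpha}\left\{\frac{t_{n+1}^\alpha(n+1)}{h}+\frac{t_n^\alpha}{h^2}\right\},
\]
asserting that the second term tends to $0$ as $h\to 0$. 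Your route---writing $\int_0^{t_{n+1}}=\int_0^{t_n}+\int_{t_n}^{t_{n+1}}$, exploiting the constant sign of $(t_{n+1}-t)^{\alpha-1}-(t_n-t)^{\alpha-1}$ for $\alpha\in(0,1)$, and obtaining a memory bound of order $h^\alpha$ \emph{uniformly in $n$}---buys something the paper's does not: the paper's separate bounds retain the growing factors $t_{n+1}^\alpha$ and $n!h^n$, and in the natural regime $nh=T$ fixed one has $n!h^n\sim\sqrt{2\pi n}\,(T/e)^n$, which does not tend to zero unless $T<e$, so the paper's final claim is not actually justified as stated; your estimate vanishes with $h$ regardless of how $n$ grows, which is what the stated stability condition genuinely requires. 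One small slip on your side: the integrated kernel difference equals $\bigl(h^\alpha+t_n^\alpha-t_{n+1}^\alpha\bigr)/\alpha$ in absolute value, so the memory contribution is bounded by $\frac{\|f\|_\infty}{ABC(\alpha)\Gamma(\alpha)}\bigl(2h^\alpha-(t_{n+1}^\alpha-t_n^\alpha)\bigr)$ rather than by $\frac{\|f\|_\infty}{ABC(\alpha)\Gamma(\alpha)}\bigl(h^\alpha+|t_{n+1}^\alpha-t_n^\alpha|\bigr)$; since both expressions are at most $2h^\alpha$ times the constant, your conclusion is unaffected.
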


\begin{proof}
\begin{eqnarray}
\left\|y_{n+1}-y_n\right\|_\infty&=&\left\|\frac{1-\alpha}{ABC(\alpha)}\left[f(t_n,y_n)-f(t_{n-1},y_{n-1})\right]+\frac{\alpha}{ABC(\alpha)\Gamma(\alpha)}\right.\nonumber\\
&&\left.\times\left[\int_{0}^{t_{n+1}}f(t,y(t))(t_{n+1}-t)^{\alpha-1}dt-\int_{0}^{t_n} f(t,y(t))(t_n-t)^{\alpha-1} dt\right]\right\|_\infty\nonumber\\
&<& \left\|\frac{1-\alpha}{ABC(\alpha)}\left[f(t_n,y_n)-f(t_{n-1},y_{n-1})\right]\right\|_\infty+\frac{\alpha}{ABC(\alpha)\Gamma(\alpha)}\times\nonumber\\
&&\left\|\left[\int_{0}^{t_{n+1}}f(t,y(t))(t_{n+1}-t)^{\alpha-1}dt-\int_{0}^{t_n} f(t,y(t))(t_n-t)^{\alpha-1} dt\right]\right\|_\infty\nonumber\\
&<&\frac{1-\alpha}{ABC(\alpha)}\left\|f(t_n,y_n)-f(t_{n-1},y_{n-1}) \right\|_\infty\nonumber\\
&& +  \frac{\alpha}{ABC(\alpha)\Gamma(\alpha)} \left\|\int_{0}^{t_{n+1}}f(t,y(t))(t_{n+1}-t)^{\alpha-1}dt \right\|_\infty\nonumber\\
&& +  \frac{\alpha}{ABC(\alpha)\Gamma(\alpha)} \left\|\int_{0}^{t_{n}}f(t,y(t))(t_{n}-t)^{\alpha-1}dt \right\|_\infty.\nonumber
\end{eqnarray}
Thus
\begin{eqnarray}
\left\|y_{n+1}-y_n\right\|_\infty&<&\frac{1-\alpha}{ABC(\alpha)}\left\|f(t_n,y_n)-f(t_{n-1},y_{n-1}) \right\|_\infty\nonumber\\
&&+ \frac{\alpha}{ABC(\alpha)\Gamma(\alpha)}\left\|\int_{0}^{t_{n+1}} (t_{n+1}-t)^{\alpha-1}\sum_{i=0}^{n}\prod_{0\le i\le n} \frac{(t-t_i)}{(-1)^ih}f(t_i,y_i)dt \right\|_\infty\nonumber\\
&&+ \frac{\alpha}{ABC(\alpha)\Gamma(\alpha)}\left\|\int_{0}^{t_{n}} (t_{n}-t)^{\alpha-1}\sum_{i=0}^{n-1}\prod_{0\le i\le n-1} f(t_i,y_i)dt \right\|_\infty\nonumber\\
&<&\frac{1-\alpha}{ABC(\alpha)}\left\|f(t_n,y_n)-f(t_{n-1},y_{n-1}) \right\|_\infty + \left\|P_n^\alpha(t) \right\|_\infty+\left\|R_n^\alpha(t) \right\|_\infty
\end{eqnarray}
where
\begin{eqnarray}
\left\|P_n^\alpha(t) \right\|_\infty&=&\left\|\int_{0}^{t_{n+1}}(t_{n+1}-t)^{\alpha-1}\sum_{i=0}^{n}\frac{(t-t_i)}{(-1)^ih}f(t_i,y_i)dt \right\|_\infty\nonumber\\
&\le& \sum_{i=0}^{n}\frac{\|f(t_i,y_i) \|_\infty}{h} \frac{t^\alpha_{n+1}}{\alpha}\prod_{i=0}^{n}|t-t_i|\nonumber\\
&\le& \sum_{i=0}^{n}\frac{\|f(t_i,y_i) \|_\infty}{h} \frac{t^\alpha_{n+1}}{\alpha}\frac{n! h^n}{4}
\end{eqnarray}
and
\begin{equation}
\left\|R_n^\alpha(t) \right\|_\infty \le \sum_{i=0}^{n}\frac{\|f(t_i,y_i) \|_\infty}{h} \frac{t^\alpha_{n}}{\alpha}(n-1)!\frac{h^{n-1}}{4}.
\end{equation}
Hence,
\begin{eqnarray}
\left\|y_{n+1}-y_n\right\|_\infty&<&\frac{1-\alpha}{ABC(\alpha)}\left\|f(t_n,y_n)-f(t_{n-1},y_{n-1}) \right\|_\infty\nonumber\\
&&+\sum_{i=0}^{n}\frac{\|f(t_i,y_i) \|_\infty}{4\alpha}t^{\alpha}_{n+1}h^{n-1}n!+ \sum_{i=0}^{n-1}\frac{\|f(t_{i-1},y_{i-1}) \|_\infty}{4\alpha}t^{\alpha}_{n}h^{n-3}(n-1)!\nonumber\\
&<&\frac{Mn!h^n}{4\alpha}\left\{\frac{t_{n+1}^\alpha(n+1)}{h}+\frac{t_n^\alpha}{h^2} \right\} +  \frac{1-\alpha}{ABC(\alpha)}\left\|f(t_n,y_n)-f(t_{n-1},y_{n-1}) \right\|_\infty.\nonumber
\end{eqnarray}
where $\displaystyle M=\max_{t\in[0,t_{n+1}]}|f(t,y(t))|$, for $n\rightarrow \infty$ as $\left\|f(t_n,y_n)-f(t_{n-1},y_{n-1}) \right\|\rightarrow 0$ and $\frac{Mn!h^n}{4\alpha}\rightarrow 0$ as $h\rightarrow 0$. The proof is completed.
\end{proof}

\section{Numerical experiments}
In this section, we experiment the performance of the derived  fractional Adams-Bashforth schemes for the Caputo, Caputo-Fabrizio and Atangana-Baleanu derivatives.

We consider the fractional Fisher's equation
\begin{equation}\label{e1}
D_t^\alpha u(x,t)=\delta\frac{\partial^2u(x,t)}{\partial x^2}+u(x,t)(1-u(x,t))+f(x,t)
\end{equation}
subject to the initial and Neumann boundary conditions
\begin{eqnarray}\label{e2}
u(x,0)&=&\cos(5\pi x)\nonumber\\
u_x(0,t)&=&t^3,\;\;\;\; u_x(1,t)=2t^4+t^3,\;\;t\ge 0,\\
u(x,t)&=&(t^\tau+1)\cos(5\pi x)+t^4 x^2+t^3x,\nonumber
\end{eqnarray}
and
\begin{eqnarray}\label{e3}
f(x,t)&=&\cos(5\pi x)\left[\frac{\Gamma(\tau+1)}{\Gamma(\tau+1-\alpha)}t^{\tau-\alpha}+25\pi^2(t^\tau+1)-(t^\tau+1) +(t^{\tau+1})^2\cos(5\pi x) \nonumber\right.\\
&&\left. +2t^3(t^\tau+1)x+2t^4(t^{\tau+1})x^2\right]+\frac{\Gamma(5)}{\Gamma(5-\alpha)}t^{4-\alpha}x^2 + \frac{\Gamma(4)}{\Gamma(4-\alpha)}t^{3-\alpha}x\nonumber\\
&&-2t^4-x^2t^4-t^3x+t^6x^2+t^8x^4+2t^7x^3,
\end{eqnarray}
where the fractional derivative in (\ref{e1}) is taken to be the Caputo, Caputo-Fbrizio and Atangana-Baleanu derivatives.

Next, we discretize the space and time derivatives. To approximate the space derivative, we consider a uniform mesh on the interval $[0, L]$, defined by the grid-points $x_i=i\Delta x$ for $i=0,1,2,\ldots,N$, where $\Delta x=\frac{L}{N}$, and at every $x=x_i,\;i=1,2,\ldots,N-1$. For the second order partial derivatives, we consider the the second-order finite difference
\begin{equation}
\frac{\partial^2u(x_i,t)}{\partial x^2}=\frac{u(x_{i+1},t)-2u(x_i,t)+u(x_{i-1},t)}{(\Delta x)^2}-\frac{(\Delta x)^2}{12} \frac{\partial^4u}{\partial x^4}(E_i,t), \;\text{for}\;E_i\in(x_{i-1},x_{i+1}).
\end{equation}

The results in Tables \ref{table1} and \ref{table2} show the performance of the fractional Adams-Bashforth schemes in conjunction with various derivatives as given in the table captions. We list the maximum errors computed as $\|E\|=\max|U_e-U_c|$, where $U_e$ and $U_c$ are the exact and computed results.

\begin{table}[ht]
	\caption{Maximum error results for equation (\ref{e1}) with the Caputo, Caputo-Fabrizio and Atangana-Baleanu derivatives at $\delta=10, \tau=1, \alpha=0.35, t=0.5, L=1$, simulation runs for $N=100$.} \centering
	\begin{tabular}{c c c c c}
		\hline
		$\Delta t$ & $\Delta x$ & Caputo & Caputo-Fabrizio & Atangana-Baleanu  \\ 	\hline
		0.25 &0.5&6.6656e-06 & 4.6187e-06 & 1.4782e-06   \\
		0.0625 &0.25 &1.0653e-06 & 7.1804e-07 & 2.2827e-07   \\
		0.015625 & 0.1250 &3.3161e-07 & 2.1293e-07 & 6.6861e-08     \\
		0.00390625 &0.0625& 1.3995e-07 & 8.3324e-08 & 2.5625e-08   \\[1ex]
		\hline
	\end{tabular}
	\label{table1}
\end{table}

\begin{table}[ht]
	\caption{Maximum error and timing results of (\ref{e1}) obtained at some different values of $\alpha$ with the  Caputo, Caputo-Fabrizio and Atangana-Baleanu derivatives at $\delta=1.0, \tau=1, \Delta t=0.05, \Delta x=0.25, t=1.0, L=1$, simulation runs for $N=100$.  } \centering
	\begin{tabular}{c c c c c c c}
	\hline
	$\alpha$ & Caputo & CPU & Caputo-Fabrizio & CPU & Atangana-Baleanu & CPU \\ [0.5ex]\hline
	0.21 &6.7827e-06 & 0.18 & 4.3656e-07 & 0.17 & 9.8489e-08 & 0.18 \\
	0.43 & 1.0663e-05 & 0.18 & 1.0118e-06 & 0.18 & 2.2731e-07 & 0.18 \\
	0.65 & 7.8794e-06 & 0.18 & 1.0197e-06 & 0.18 & 2.2784e-07 & 0.18  \\
	0.89 & 2.9779e-06 & 0.18 & 4.1988e-07 & 0.17 & 8.9765e-08& 0.17 \\[1ex]
		\hline
	\end{tabular}
	\label{table2}
\end{table}

\begin{figure}[!h]
	\centering
	\begin{tabular}{c}
		\begin{minipage}{400pt}
			\includegraphics[width=400pt]{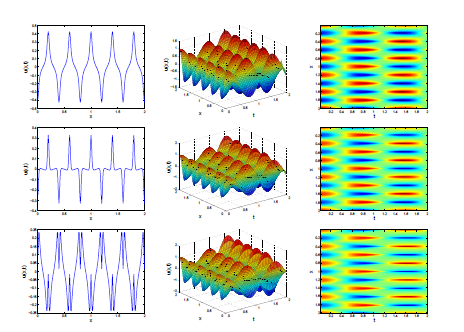}
		\end{minipage}
		\end{tabular}
	\caption{Spatial evolution of fractional Fisher's equation (\ref{e1}) obtained at $\delta=0.1, \alpha=0.35, \tau=1.0, t=2$ for different derivatives. The upper-, middle- and lower-rows correspond to the Caputo, Caputo-Fabrizio and the Atangana-Baleanu derivatives respectively.}\label{Fig1}
\end{figure}

\begin{figure}[!h]
 	\centering
 	\begin{tabular}{c}
 		\begin{minipage}{400pt}
 			\includegraphics[width=400pt]{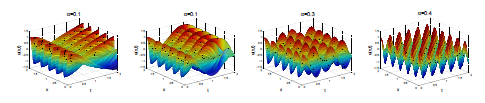}
 		\end{minipage}
 	\end{tabular}
 	\caption{One-dimensional results showing the distribution of $u(x,t)$ at different instances of $\alpha=0.1(0.1)0.4$ for $t=1$. Other parameters are given in Figure \ref{Fig1}. }\label{Fig2}
 \end{figure}

 \begin{figure}[!h]
 	\centering
 	\begin{tabular}{c}
 		\begin{minipage}{400pt}
 			\includegraphics[width=400pt]{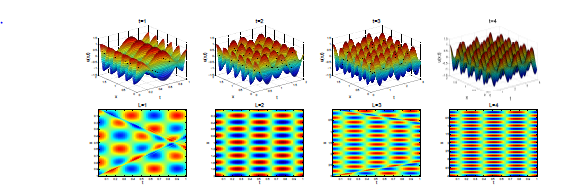}
 		\end{minipage}
 	\end{tabular}
 	\caption{Results showing the effects of time $t$ (upper-row) and spatial domain $L$ (lower-row) Other parameters are given in Figure \ref{Fig1}. }\label{Fig3}
 \end{figure}

For the second experiment, we simulate equations (\ref{e1}-\ref{e3}) on a spatial domain $\Omega\in[0, L]$, for $L>0$ with $\Delta t=0.25$, $\Delta x=0.5$ and $\tau=0.9$ with $N=200$. In Figure \ref{Fig1}, we demonstrate the behaviours of the fractional derivatives for $\alpha=0.35$. The upper, middle and lower rows correspond to the Caputo, Caputo-Fabrizio and Atangana-Baleanu fractional derivatives. Merely looking at the figures, one may conclude that the three derivatives yielded a  similar results. But a keen look will observe that they are not similar, column-1 of Figure \ref{Fig1} is a proof to this assertion.

The effects of fraction index $\alpha$ is observed in Figure \ref{Fig2} with the Caputo derivative. Likewise in Figure \ref{Fig3} (upper-row), we examined the distribution of $u(x,t)$ at some instances of final simulation time $t$. As the time is increasing so also the number of oscillations. In the lower panels, we observed the effects of increasing the domain size $L$, different patterns are obtained. It should be noted that other structures such as pure-spots, stripe and spatiotemporal patterns apart from what is reported in this work can be obtained, depending on how parameter values are chosen.

\section{Conclusion}
This paper has proposed the correct version of the fractional Adams-Bashforth methods which take into account the nonlinearity of the kernels including the power law for the Riemann-Liouville type, the exponential decay law for the Caputo-Fabrizio case and the Mittag-Leffler law for the Atangana-Baleanu scenario.
The stability, as well as convergence results for each of the derivatives, are clearly presented. We compute the maximum norm error to check the performances of these schemes via the fractional Fisher's equation at some instances of fractional-order $\alpha$. Formulation of space fractional Adams-Bashforth scheme with the Caputo, Caputo-Fabrizio and Atangana-Baleanu Riemann-Liouville derivatives, as well as their applications to real life problems are left for future research.


\end{document}